\documentclass[12pt]{amsart}
\usepackage{amsfonts}
\usepackage{bbm}
\usepackage{mathrsfs}
\usepackage{txfonts}
\usepackage{amssymb}
\usepackage{graphicx, epsf,amssymb,amscd,amsfonts,times}
\usepackage{color,xspace,colortbl}
\usepackage{mathtools}
\usepackage{overpic}

\newtheorem{thm}{Theorem}[section]

\newtheorem{prop}[thm]{Proposition}
\newtheorem{defn}[thm]{Definition}
\newtheorem{lemma}[thm]{Lemma}
\newtheorem{cor}[thm]{Corollary}

\theoremstyle{remark}
\newtheorem{rmk}[thm]{Remark}

\newcommand{\s}{\vskip.1in}
\newcommand{\n}{\noindent}
\newcommand{\bdry}{\partial}

\newcommand{\be}{\begin{enumerate}}
\newcommand{\ee}{\end{enumerate}}

\numberwithin{equation}{subsection}

\topmargin.3in \textheight8.5in \textwidth6.5in \oddsidemargin0in
\evensidemargin0in

\begin{document}

\title{Bypass attachments and homotopy classes of 2-plane fields in contact topology}

\author{Yang Huang}
\address{University of Southern California, Los Angeles, CA 90089}
\email{huangyan@usc.edu}

\begin{abstract}
We use the generalized Pontryagin-Thom construction to analyze the effect of attaching a bypass on the homotopy class of the contact structure. In particular, given a 3-dimensional contact manifold with convex boundary, we show that the bypass triangle attachment changes the homotopy class of the contact structure relative to the boundary, and the difference is measured by the Hopf invariant.
\end{abstract}

\maketitle

The goal of this paper is to study how bypass attachments affect the homotopy type of the contact structure on a given contact manifold with convex boundary. Although the notion of a {\em bypass} was defined by K. Honda in \cite{Ho1} and has been used in various classification problems in 3-dimensional contact geometry, it has not been clear until now how this operation changes the homotopy class of the underlying 2-plane field distribution. In particular, we will see in this paper how a special sequence of bypass attachments, namely, a {\em bypass triangle attachment}, affects the homotopy type of the contact structure.

Let $M$ be an compact oriented 3-manifold with boundary. Let $\xi$ and $\xi'$ be two co-oriented contact structures on $M$ such that $\xi=\xi'$ in the complement of an open ball $B^3 \subset int(M)$. Using a generalization of the Pontryagin-Thom construction for compact manifolds with boundary, we define a 3-dimensional obstruction class $o_3(\xi,\xi') \in \mathbb{Z}/d(\xi)$, where $d(\xi)$ is the divisibility of the Euler class $e(\xi)=e(\xi') \in H^2(M,\mathbb{Z})$, and use it to distinguish homotopy classes of $\xi$ and $\xi'$.

In order to state the main result of this paper, we first need to define a bypass. Let $\Sigma$ be a convex surface, and $\alpha$ be a Legendrian arc on $\Sigma$ which intersects the dividing set $\Gamma_\Sigma$ in three points. According to~\cite{Ho1}, a {\em bypass} along $\alpha$ on $\Sigma$ is half of an overtwisted disk whose boundary is the union of two Legendrian arcs $\alpha\cup\beta$, where the {\em Thurston-Bennequin invariants}\footnote{By fixing framing at endpoints, the Thurston-Bennequin invariant is well-defined for Legendrian arcs.} of $\alpha$ and $\beta$ are $-1$ and 0, respectively. See Section 1 for the construction of a bypass attachment along $\alpha$, which we denote by $\sigma_\alpha$. We note here that $\sigma_\alpha$ locally changes the dividing set in a neighborhood of $\alpha$ as depicted in Figure~\ref{BypassAtt}.

\begin{figure}[h]
    \begin{overpic}[scale=.55]{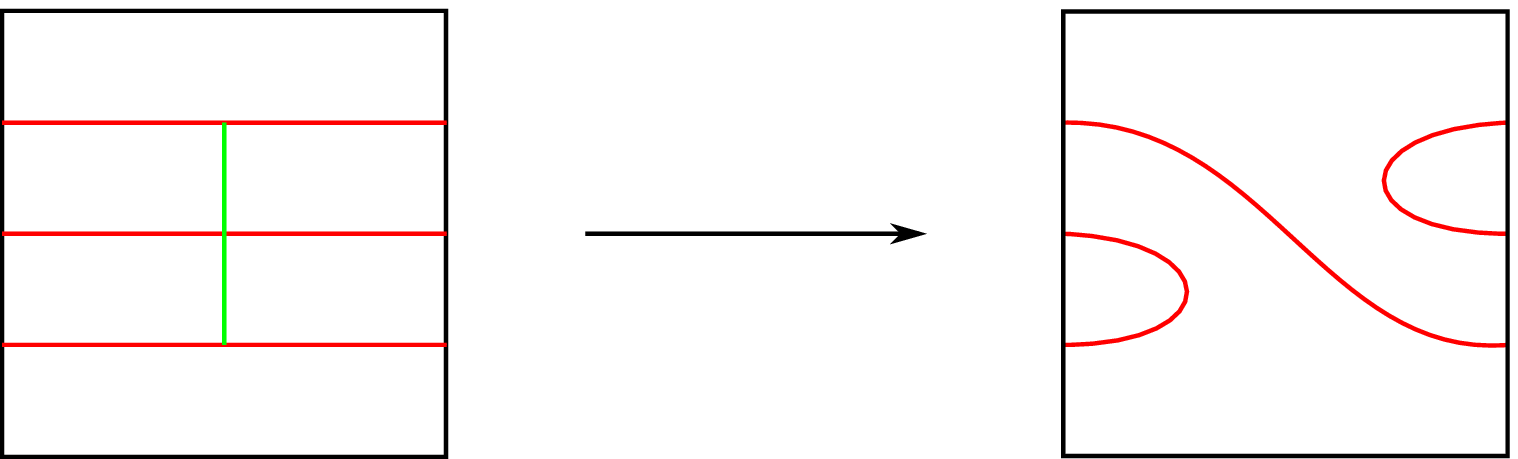}
    \put(16,10){$\alpha$}
    \put(48,17){$\sigma_\alpha$}
    \end{overpic}
    \caption{}
    \label{BypassAtt}
\end{figure}

In this paper, we study the effect of a bypass attachment on the homotopy class of the contact structure. Namely, by making several choices, we compute the {\em relative Pontryagin submanifold} for a bypass attachment as follows. The definition of relative Pontryagin submanifold is discussed in Section 2.

Let $V=[-3/4,3/4]\times[-1,1]\times[0,1] \subset \mathbb{R}^3$ be a 3-manifold with boundary equipped with the standard coordinates, and $\xi$ be a contact structure on $V$ defined by $\xi=ker~\lambda$, where $\lambda = \cos(2\pi x)dy - \sin(2\pi x)dz$. Let $\alpha=[-1/2,1/2]\times\{0\}\times\{1\}$ be a Legendrian arc. We denote by $\xi\ast\sigma_\alpha$ the contact structure given by a bypass attachment to $\xi$ along $\alpha$. See Section 3 for the explicit construction of $(V,\xi\ast\sigma_\alpha)$. Trivialize $TV$ by the standard embedding $V \subset \mathbb{R}^3$ and look at the associated Gauss map $G_{\xi\ast\sigma_\alpha}: V \to S^2$. Observe that $p=(1,0,0) \in S^2$ is a regular value of $G_{\xi\ast\sigma_\alpha}$ by construction.

\begin{thm} \label{MainThm1}
Let $(V,\xi\ast\sigma_\alpha)$ be the contact manifold described above. Then the Pontryagin submanifold $G^{-1}_{\xi\ast\sigma_\alpha}(p) \subset V$ is a properly embedded framed arc with framing as depicted in Figure~\ref{thm1}.

\begin{figure}[h]
    \begin{overpic}[scale=.25]{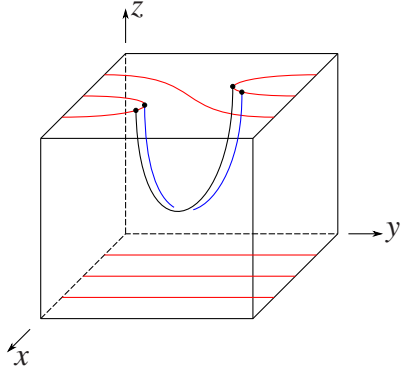}
    \put(1.5,-3.5){$x$}
    \put(101,31){$y$}
    \put(33,90){$z$}
    \end{overpic}
    \caption{The Pontryagin submanifold $G^{-1}_{\xi\ast\sigma_\alpha}(p)$ in $V$. The blue arc is a parallel copy of $G^{-1}_{\xi\ast\sigma_\alpha}(p)$ which defines the framing.}
    \label{thm1}
\end{figure}
\end{thm}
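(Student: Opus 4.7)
The plan is to exploit the locality of the bypass attachment together with the very restrictive Gauss map of the background contact structure $\xi$. Writing $\lambda = \cos(2\pi x)\,dy - \sin(2\pi x)\,dz$, the Euclidean dual of $\lambda$ is already a unit vector, so the Gauss map of the unperturbed $\xi$ is
$$G_\xi(x,y,z) = (0,\ \cos(2\pi x),\ -\sin(2\pi x)),$$
whose image is the great circle $\{x=0\}\cap S^2$ and therefore misses $p=(1,0,0)$. Since $\xi\ast\sigma_\alpha$ agrees with $\xi$ outside a small tubular neighborhood $N$ of the bypass half-disk, we have $G^{-1}_{\xi\ast\sigma_\alpha}(p) \subset N$, and the theorem reduces to a calculation on $N$.

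I would then adopt the explicit coordinate model of $(N,\xi\ast\sigma_\alpha)$ constructed in Section~3 and write the contact form there as $\mu = f\,dx + g\,dy + h\,dz$. A point lies in $G^{-1}_{\xi\ast\sigma_\alpha}(p)$ precisely when $g = h = 0$ and $f > 0$. The geometric content of a bypass is a half-rotation of the contact plane across the bypass disk, so along each leaf of a transverse foliation of $N$ the pair $(g,h)$ sweeps through a half-turn around the origin and hits $(0,0)$ transversely at a single point. Assembling these points across the leaves yields one properly embedded arc in $N$ whose endpoints lie on $\partial V$ near the ends of $\alpha$; the sign condition $f>0$ (rather than $f<0$) is what selects this arc as the preimage of $p$ instead of $-p$, identifying it with the arc of Figure~\ref{thm1}.

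The framing is then obtained by repeating the argument with a regular value $p_\epsilon$ slightly perturbed on $S^2$. The same transversality produces a parallel arc $G^{-1}_{\xi\ast\sigma_\alpha}(p_\epsilon)$ close to the original one, and by varying the direction of $p_\epsilon$ in $T_p S^2$ while tracking the orientations of $V\subset\mathbb{R}^3$ and of the co-oriented $\xi$, one pins down the framing to match the blue arc in Figure~\ref{thm1}.

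The main obstacle is producing the explicit interpolated contact form $\mu$ on $N$ — a smooth $1$-form that equals $\lambda$ on $\partial N\setminus\alpha$, realizes the prescribed dividing-set change on $\partial V\cap N$, and is contact throughout — and then verifying that its $(g,h)$-zero locus really is one arc on which $f>0$, rather than some more degenerate subset. Once this model is in hand, locating the preimage arc and computing its framing are direct computations together with a careful sign check against the standard orientations.
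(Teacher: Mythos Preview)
Your opening observations coincide with the paper's: the Gauss map of the unperturbed $\xi$ lands in the great circle $\{x=0\}\subset S^2$ and misses $p=(1,0,0)$, so $G^{-1}_{\xi\ast\sigma_\alpha}(p)$ is confined to the bypass region, and the framing is read off from a nearby regular value $p'$. Where the two approaches diverge is precisely at the obstacle you flag.

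You propose to produce an explicit interpolated contact form $\mu=f\,dx+g\,dy+h\,dz$ on $V$ and then locate the arc $\{g=h=0,\ f>0\}$. The paper never writes down such a $\mu$. Instead it attaches the bypass \emph{externally}: one glues a $\partial/\partial y$-invariant thickening $D_\alpha\times[-\epsilon,\epsilon]$ of the half-overtwisted disk $D_\alpha=\{1\le z\le 1+\sqrt{1/4-x^2},\ y=0\}$ on top of $V$, obtaining a contact manifold $(V_\alpha,\xi_\alpha)$ whose contact structure on the attached piece is completely explicit (the product extension of the standard singular foliation on $D_\alpha$). On $V_\alpha$ the preimages $G^{-1}_{\xi_\alpha}(p)$ and $G^{-1}_{\xi_\alpha}(p')$, with $p'=(1-\delta,\sqrt{2\delta-\delta^2},0)$, are read off directly as two parallel arcs with endpoints on $D_\alpha\times\{\pm\epsilon\}$. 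One then \emph{defines} $\xi\ast\sigma_\alpha$ on $V$ as the pushforward $(\phi^1_X)_\ast\xi_\alpha$ under a diffeomorphism $\phi^1_X:V_\alpha\to V$ given by the time-$1$ flow of $g\,\partial/\partial z$ for a suitable nonpositive $g$; since such a flow preserves the condition ``$\partial/\partial z$ lies in the contact plane'' (equivalently, ``Gauss image lies on the equator $\{z=0\}$''), the Pontryagin arc and its framing in $V$ are simply the $\phi^1_X$-images of those already found in $V_\alpha$.

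So your plan is not wrong, but the interpolation problem you single out as the main obstacle is exactly what the paper's attach-externally-then-push-down trick is engineered to avoid. Your half-rotation heuristic for $(g,h)$ is the right intuition, and in the paper's model it becomes the concrete statement that the characteristic foliation on $D_\alpha$ is that of half an overtwisted disk; but making your version rigorous without that external model would effectively force you to reconstruct it.
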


\begin{rmk}
The Pontryagin submanifold $G^{-1}_{\xi\ast\sigma_\alpha}(p)$ in Theorem~\ref{MainThm1} depends on various choices including the trivialization of $TV$ and the regular value $p$. For example, it will be clear from the proof of Theorem~\ref{MainThm1} that $q=(-1,0,0) \in S^2$ is also a regular value of $G_{\xi\ast\sigma_\alpha}$, but $G^{-1}_{\xi\ast\sigma_\alpha}(q)$ is the empty set.
\end{rmk}

The following corollary follows immediately from Theorem~\ref{MainThm1} by the local nature of the bypass attachment.

\begin{cor}
Let $(M,\xi)$ be a contact 3-manifold with convex boundary and $\alpha \subset \bdry M$ be a Legendrian arc along which a bypass can be attached. Then there exists a trivialization of $TM$ and a common regular value $p \in S^2$ of $G_\xi$ and $G_{\xi\ast\sigma_\alpha}$ such that the Pontryagin submanifold $G^{-1}_{\xi\ast\sigma_\alpha}(p) = G^{-1}_\xi(p) \cup \gamma$, where $G^{-1}_\xi(p)$ is the Pontryagin submanifold associated with $\xi$ and $\gamma \subset M$ is a properly embedded framed arc as depicted in Figure~\ref{thm1} which does not link $G^{-1}_\xi(p)$.
\end{cor}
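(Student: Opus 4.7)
The plan is to localize Theorem~\ref{MainThm1} to a ball-neighborhood of $\alpha$ in $M$. First, using the convex neighborhood theorem together with the standard neighborhood theorem for a Legendrian arc with prescribed Thurston-Bennequin invariant and dividing-set intersection pattern, I would produce a ball-neighborhood $N \subset M$ of $\alpha$ and a contactomorphism of $(N,\xi|_N)$ onto a neighborhood of $\alpha$ in the model $(V,\xi)$ of Theorem~\ref{MainThm1}. Since the bypass attachment is by construction supported in an arbitrarily small collar of $\bdry M$ near $\alpha$, I may shrink $N$ so that it contains the support of this modification; in particular $\xi = \xi \ast \sigma_\alpha$ on $M \setminus N$. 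I would then pull back the standard frame on $V \subset \mathbb{R}^3$ to $N$ and extend it to a global trivialization of $TM$, which exists because every orientable 3-manifold is parallelizable.

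Next, I would observe that on $V$ the Gauss map of $\xi = \ker(\cos(2\pi x)\,dy - \sin(2\pi x)\,dz)$ sends $(x,y,z)$ to the unit normal $(0,\cos(2\pi x),-\sin(2\pi x)) \in S^2$, whose image lies in the great circle in the $yz$-plane. In particular, $p = (1,0,0)$ is not in the image of $G_\xi$ restricted to $N$, so $G^{-1}_\xi(p) \cap N = \emptyset$. By Sard's theorem applied to $G_\xi$ and $G_{\xi \ast \sigma_\alpha}$ globally on $M$, I can select a common regular value $p$ arbitrarily close to $(1,0,0)$; since non-image is an open condition, $p$ is still missed by $G_\xi$ on $N$, and by constancy of the preimage type on connected components of regular values applied inside $N$ together with Theorem~\ref{MainThm1}, the intersection $G^{-1}_{\xi \ast \sigma_\alpha}(p) \cap N$ remains a framed arc isotopic to the $\gamma$ of Figure~\ref{thm1}.

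Combining: on $M \setminus N$ the two Gauss maps coincide, yielding $G^{-1}_{\xi \ast \sigma_\alpha}(p) \cap (M \setminus N) = G^{-1}_\xi(p)$; inside $N$ the preimages are, respectively, $\gamma$ and $\emptyset$. Hence $G^{-1}_{\xi \ast \sigma_\alpha}(p) = G^{-1}_\xi(p) \cup \gamma$ as framed submanifolds. Because $\gamma \subset N$ and $G^{-1}_\xi(p) \cap N = \emptyset$, the arc $\gamma$ is contained in a ball disjoint from $G^{-1}_\xi(p)$, so it does not link $G^{-1}_\xi(p)$.

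The main obstacle is the first step: producing the local identification of $(N,\xi|_N)$ with the model $(V,\xi)$ in a way compatible with \emph{all} the bypass attachment data, namely the Legendrian arc $\alpha$ with its Thurston-Bennequin framing, the three-point intersection with the dividing set, and the local form of $\xi \ast \sigma_\alpha$. Once this identification is established, the remainder of the argument is bookkeeping: extending the frame, a standard application of Sard's theorem, and invoking the explicit Gauss-map computation on $V$ to see that the extra component $\gamma$ produced by the bypass is confined to $N$.
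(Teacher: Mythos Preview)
Your proposal is correct and is precisely the elaboration of what the paper leaves implicit: the paper gives no separate proof of this corollary, stating only that it ``follows immediately from Theorem~\ref{MainThm1} by the local nature of the bypass attachment.'' Your argument unpacks exactly this---identifying a ball neighborhood of $\alpha$ with the model $(V,\xi)$, extending the pulled-back frame (legitimate since $N$ is contractible and $M$ is parallelizable), and then reading off the preimage decomposition---so there is no divergence in approach, only in level of detail.
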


As an application, we study the effect of a bypass triangle attachment on the homotopy class of the contact structure. We first define a bypass triangle attachment as follows.

\begin{defn}
Let $(M,\xi)$ be a contact 3-manifold with convex boundary and $\alpha\subset\bdry M$ be a Legendrian arc. A {\em bypass triangle attachment} along $\alpha$ is the composition of three bypass attachments along Legendrian arcs $\alpha$, $\alpha'$ and $\alpha''$ as depicted in Figure~\ref{BypassTri}. We denote the bypass triangle attachment along $\alpha$ by $\triangle_\alpha=\sigma_\alpha\ast\sigma_{\alpha'}\ast\sigma_{\alpha''}$, where the composition $\ast$ is from left to right, i.e., we attach $\sigma_\alpha$ first, followed by $\sigma_{\alpha'}$ and then $\sigma_{\alpha''}$.
\end{defn}

\begin{figure}[h]
    \begin{overpic}[scale=.38]{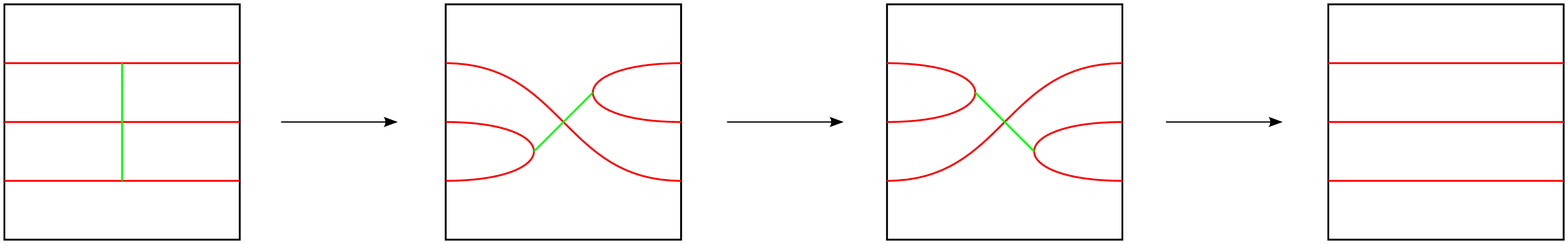}
    \put(8,5.2){\small{$\alpha$}}
    \put(35.3,9.2){\small{$\alpha'$}}
    \put(63.3,9){\small{$\alpha''$}}
    \put(20,9){\small{$\sigma_\alpha$}}
    \put(48.5,9){\small{$\sigma_{\alpha'}$}}
    \put(76.6,9){\small{$\sigma_{\alpha''}$}}
    \put(6,-4){(a)}
    \put(34.2,-4){(b)}
    \put(63,-4){(c)}
    \put(91,-4){(d)}
    \end{overpic}
    \newline
    \caption{}
    \label{BypassTri}
\end{figure}

It follows from Giroux's Flexibility Theorem (c.f. Theorem~\ref{Flex}) that a bypass triangle attachment does not change the contact structure in a neighborhood of $\bdry M$ up to isotopy. In fact, it only affects the contact structure within a ball embedded in the interior of $M$, which can be measured by a 3-dimensional obstruction class $o_3$ defined in Section 2. Now we state the following theorem for the homotopy class of a bypass triangle attachment.

\begin{thm} \label{MainThm2}
If $(M,\xi)$ is a contact manifold with convex boundary, and $\xi'$ is the contact structure obtained from $\xi$ by attaching a bypass triangle on $\bdry M$, then $o_3(\xi,\xi')=-1$. In particular, $\xi'$ is not homotopic to $\xi$ relative to the boundary as 2-plane field distributions.
\end{thm}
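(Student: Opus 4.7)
My plan is to localize the computation to a ball using Giroux's Flexibility Theorem, iterate Theorem~\ref{MainThm1}, and assemble the resulting Pontryagin submanifolds into a single framed knot whose Hopf invariant is computed directly. First, since a bypass triangle attachment returns the dividing set to (an isotopic copy of) its original configuration, Giroux's Flexibility Theorem provides an ambient isotopy, supported in a collar of $\bdry M$, that identifies the germs of $\xi$ and $\xi'$ near the boundary. Absorbing this isotopy, I may assume $\xi = \xi'$ outside a closed ball $B^3 \subset \mathrm{int}(M)$ containing the support of the triangle attachment. By the definition of $o_3$ in Section~2, $o_3(\xi,\xi')$ is then represented by the framed cobordism class in $\pi_3(S^2) \cong \mathbb{Z}$ of the difference of the two Pontryagin submanifolds restricted to $B^3$.

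Next, I apply the corollary of Theorem~\ref{MainThm1} three times, once for each of the constituent bypass attachments $\sigma_\alpha$, $\sigma_{\alpha'}$, $\sigma_{\alpha''}$. Fixing a trivialization of $TM|_{B^3}$ and a common regular value $p \in S^2$ compatible with all three local models, each attachment contributes, on top of $G_\xi^{-1}(p)$, an additional framed arc $\gamma_i$ of the standard shape in Figure~\ref{thm1}, supported in a thickened neighborhood $V_i$ of $\alpha_i$. The three neighborhoods $V_1, V_2, V_3$ are arranged around the triangular region of Figure~\ref{BypassTri}, and the endpoints of each $\gamma_i$ lie on the convex surface exactly at the positive/negative tips of the modified dividing set of Figure~\ref{BypassAtt}. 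Tracking these endpoints through the Giroux isotopy that restores the original boundary germ, the three arcs concatenate end-to-end into a single properly embedded closed framed knot $K \subset B^3$, disjoint from $G_\xi^{-1}(p)$.

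The last step is to identify the framed cobordism class of $K$ inside $B^3$. A direct inspection shows that $K$ is an unknot, and its framing, obtained by assembling the three blue pushoff copies of Figure~\ref{thm1}, differs from the Seifert framing by $-1$; equivalently, $K$ has self-linking $-1$ with respect to its framing pushoff. By the standard identification of framed unknots with $\pi_3(S^2) \cong \mathbb{Z}$ under the Pontryagin--Thom construction, this yields $o_3(\xi,\xi') = -1$, and the ``in particular'' conclusion follows because $o_3$ is the complete obstruction to a homotopy rel~$\bdry M$ within a fixed Euler class. The main difficulty lies in the middle step: coherently book-keeping orientations, trivializations, and framing pushoffs across the three local attachments and the intervening Giroux isotopies, so that the three framed arcs truly concatenate into a single framed knot and so that the resulting framing count is precisely $-1$ rather than some other small integer.
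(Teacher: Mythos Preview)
Your overall strategy matches the paper's: localize to a ball, compute the Pontryagin submanifold of the bypass triangle by assembling contributions from the three constituent bypasses, and identify the result as an unknot with framing $-1$. However, your middle step contains a genuine oversimplification that the paper has to work around carefully.

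You assert that each of the three bypass attachments contributes a framed arc ``of the standard shape in Figure~\ref{thm1}'' by applying the corollary of Theorem~\ref{MainThm1} three times. But Theorem~\ref{MainThm1} and its corollary compute the Pontryagin submanifold only for a very specific local model: three parallel straight dividing arcs with $\alpha$ transverse to them, and the trivialization inherited from the ambient $\mathbb{R}^3$. After $\sigma_\alpha$, the dividing set on $\Sigma_1$ is no longer of this form, so you cannot reapply Theorem~\ref{MainThm1} to $\sigma_{\alpha'}$ without changing trivializations; the corollary only asserts that \emph{some} trivialization and regular value exist for each bypass individually, not that a single choice works for all three. Tracking what happens across a change of trivialization is precisely the missing content, not mere bookkeeping. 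In the paper's actual computation (Proposition~\ref{localmodel}), with one \emph{fixed} trivialization and the \emph{fixed} regular value $p=(1,0,0)$, the three bypasses do \emph{not} each contribute a standard arc: the first contributes the arc of Figure~\ref{thm1}; the second bypass disk itself contributes nothing (after choosing the Legendrian representative of $\alpha'$ via Lemma~\ref{isotopCharFoli} so that $p$ avoids its Gauss image), while a subsequent isotopy of the characteristic foliation near the dividing set contributes a second arc capping off the first; and the third bypass contributes the empty set. These pieces do glue to a $(-1)$-framed unknot, but the accounting differs from what you sketched.

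One further small point: for the ``in particular'' clause you need $-1 \neq 0$ in $\mathbb{Z}/d(\xi)$, i.e., $d(\xi)\neq 1$. The paper notes this holds because $d(\xi)$ is always even; you should include that observation.
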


\begin{rmk}
Theorem~\ref{MainThm2} is an important ingredient in the analysis of the universal cover of a contact category $\mathscr{C}(\Sigma)$ defined in~\cite{Ho3}, i.e., the shift functor actually decreases the grading by 1.
\end{rmk}

This paper is organized as follows. In Section 1, we review some basic material in contact geometry including convex surface theory and bypasses. In Section 2, we recall the classical Pontryagin-Thom construction for closed manifold $M$, and generalize it to the case when $\bdry M$ is nonempty. As an application, we define the Hopf invariant $\pi_3(S^2)$. Finally, we give the proof of Theorem~\ref{MainThm1} and Theorem~\ref{MainThm2} in Section 3.

\section{Contact geometry preliminaries}

\subsection{Convex surfaces}

Let $(M,\xi)$ be a contact manifold. A properly embedded arc $\gamma \subset M$ is {\em Legendrian} if $T_x\gamma \subset \xi_x$ for any $x\in\gamma$. A closed oriented surface $\Sigma \subset M$ is {\em convex} if there exists a contact vector field $v$ transverse to $\Sigma$, i.e., the flow of $v$ preserves $\xi$. In particular, we always assume that $\bdry M$ is convex if nonempty.

Given a convex surface $\Sigma$, we define the {\em dividing set} $\Gamma_\Sigma \coloneqq \{x\in\Sigma~|~v(x)\in\xi_x\}$, where $v$ is a contact vector field transverse to $\Sigma$. The {\em characteristic foliation} $\Sigma_\xi$ is a singular foliation on $\Sigma$ obtained by integrating the singular line field $T\Sigma\cap\xi$. We summarize basic properties of dividing set as follows.

\be
\item{$\Gamma_\Sigma$ is a nonempty smooth 1-dimensional submanifold of $\Sigma$.}

\item{$\Gamma_\Sigma$ is transverse to $\Sigma_\xi$.}

\item{The isotopy class of $\Gamma_\Sigma$ does not depend on the choice of the transverse contact vector field $v$.}
\ee

It is not hard to see that if two contact structures induce the same characteristic foliation on $\Sigma$, then they are isotopic in a neighborhood of $\Sigma$. In fact, E. Giroux~\cite{Gi} showed that one needs much less information --- only the dividing set --- to determine the isotopy class of contact structures in a neighborhood of convex surface. This is the content of the following {\em Giroux's Flexibility Theorem}.

\begin{thm}[Giroux] \label{Flex}
Let $\Sigma$ be a convex surface with characteristic foliation $\Sigma_\xi$, $v$ be a contact vector field transverse to $\Sigma$, and $\Gamma_\Sigma$ be the dividing set. If $\mathscr{F}$ is another singular foliation on $\Sigma$ divided by $\Gamma_\Sigma$, then there exists an isotopy $\phi_t, t\in [0,1]$ such that

\be

\item{$\phi_0=id$ and $\phi_t|_{\Gamma_\Sigma}=id$ for all $t$.}

\item{$v$ is transverse to $\phi_t(\Sigma)$ for all $t$.}

\item{The characteristic foliation on $\phi_1(\Sigma)$ is $\mathscr{F}$.}

\ee
\end{thm}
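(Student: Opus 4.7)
The plan is to reduce the theorem to a question about realizing singular foliations as characteristic foliations of graph surfaces in a tubular neighborhood of $\Sigma$, using the contact vector field $v$ to fix a normal form.

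First I would use the flow of $v$ to identify a collar neighborhood of $\Sigma$ with $\Sigma\times(-\epsilon,\epsilon)$, with coordinate $t$ on the second factor, so that $v=\partial_t$. Because $v$ preserves $\xi$, one can choose a $\partial_t$-invariant defining 1-form $\alpha=\beta+u\,dt$ with $\beta\in\Omega^1(\Sigma)$ and $u\in C^\infty(\Sigma)$, and then $\Gamma_\Sigma=\{u=0\}$ and the contact condition becomes $u\,d\beta+\beta\wedge du>0$. For any smooth function $f\colon\Sigma\to(-\epsilon,\epsilon)$, the graph $\Sigma_f=\{(x,f(x))\}$ is transverse to $v$, and pulling $\alpha$ back to $\Sigma$ through the projection shows that the characteristic foliation of $\Sigma_f$ is directed by the 1-form $\beta+u\,df$ on $\Sigma$. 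At $f\equiv 0$ this is just $\beta$, recovering $\Sigma_\xi$.

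The heart of the proof will be a realization lemma: for any singular foliation $\mathscr{F}$ on $\Sigma$ divided by $\Gamma_\Sigma$, one must produce a function $f_1\colon\Sigma\to\mathbb{R}$, vanishing on $\Gamma_\Sigma$, such that $\mathscr{F}=\ker(\beta+u\,df_1)$. I would attack this by building $f_1$ locally and patching by a partition of unity. Away from $\Gamma_\Sigma$, where $u$ is nonzero, the equation $\gamma=\lambda(\beta+u\,df_1)$ (with $\gamma$ a 1-form directing $\mathscr{F}$ and $\lambda>0$) is a pointwise prescription on $df_1$ and is straightforward. Near $\Gamma_\Sigma$ one exploits the divided hypothesis: both $\beta$ and any directing form for $\mathscr{F}$ must change sign across $\Gamma_\Sigma$ in a compatible way, so the transverse component of $df_1$ along $\Gamma_\Sigma$ can be arranged to make the two forms align. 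Running this construction parametrically along a smooth path $\mathscr{F}_s$ of divided foliations from $\Sigma_\xi$ at $s=0$ to $\mathscr{F}$ at $s=1$ yields a smooth family $f_s$ with $f_0\equiv 0$ and $f_s|_{\Gamma_\Sigma}=0$.

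The desired isotopy is then $\phi_s(x)=(x,f_s(x))$, which visibly satisfies (1) and (2), and produces $\mathscr{F}$ as the characteristic foliation of $\phi_1(\Sigma)$ by construction. The main obstacle will be the realization lemma near $\Gamma_\Sigma$: verifying that the divided condition is precisely what permits one to solve for $f_1$ requires a careful local normal form for $\beta$, $u$, and the directing form of $\mathscr{F}$ near $\Gamma_\Sigma$, and one must also check that the interpolating family $\mathscr{F}_s$ can be chosen so that the divided condition holds uniformly in $s$. Everything else, including the smoothness of the resulting isotopy and its transversality to $v$, is then essentially formal.
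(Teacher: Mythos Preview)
The paper does not prove this theorem; it is quoted as background from Giroux \cite{Gi}, so there is nothing in the paper to compare your argument against directly.

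Your setup---using the flow of $v$ to write a neighborhood as $\Sigma\times(-\epsilon,\epsilon)$ with $t$-invariant contact form $\beta+u\,dt$, and realizing nearby convex surfaces as graphs $\Sigma_f$ with characteristic foliation $\ker(\beta+u\,df)$---is correct and is exactly how the standard proof begins. The gap is in your realization lemma. You claim that away from $\Gamma_\Sigma$ the equation $\gamma=\lambda(\beta+u\,df_1)$ is ``a pointwise prescription on $df_1$,'' but this overlooks that $df_1$ must be an \emph{exact} $1$-form, not an arbitrary covector field. Fixing $\lambda$ determines the covector $(\gamma/\lambda-\beta)/u$ at each point, but there is no reason for the resulting $1$-form to be closed; exploiting the freedom in $\lambda$ to force closedness is a genuine first-order PDE, not a pointwise problem, and a partition-of-unity patching of local solutions will not preserve exactness. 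So the difficulty is not confined to a neighborhood of $\Gamma_\Sigma$ as you suggest at the end.

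Giroux's argument avoids solving for $f_1$ altogether. One chooses a defining $1$-form $\gamma$ for $\mathscr{F}$ satisfying $u\,d\gamma+\gamma\wedge du>0$---this is exactly what ``divided by $\Gamma_\Sigma$'' buys you---and observes that then every $\alpha_s=(1-s)\beta+s\gamma+u\,dt$ is a contact form, since the contact condition here is convex. A Moser/Gray deformation along this path of contact structures produces the isotopy $\phi_t$ directly, and one checks from the explicit Moser vector field that it fixes $\Gamma_\Sigma$ and keeps $\phi_t(\Sigma)$ transverse to $v$. This deformation argument is what should replace your realization lemma.
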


\subsection{Bypasses}

Following \cite{Ho2}, let $\Sigma$ be a convex surface. A {\em bypass} $D$ on $\Sigma$ is a convex disk with Legendrian boundary $\bdry D=\alpha\cup\beta$ such that the following conditions hold:

\be

\item{$\alpha=\Sigma\cap D$.}

\item{$\Gamma_\Sigma\cap\alpha=\{p_1,p_2,p_3\}$, where $p_1,p_2,p_3$ are distinct points.}

\item{$\alpha\cap\beta=\{p_1,p_3\}$.}

\item{for an appropriate orientation of $D$, $p_1$ and $p_3$ are both positive elliptic singular points of $D$, $p_2$ is a negative elliptic singular point of $D$, and all the singular points along $\beta$ are positive and alternate between elliptic and hyperbolic.}

\ee

\begin{figure}[h]
  \begin{overpic}[scale=.45]{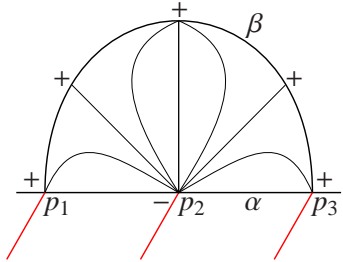}
  \put(11,15){\small{$p_1$}}
  \put(51,15){\small{$p_2$}}
  \put(90.7,15){\small{$p_3$}}
  \put(5,22){\small{$+$}}
  \put(92,22){\small{$+$}}
  \put(43.5,15.2){\small{$-$}}
  \put(83,52){\small{$+$}}
  \put(49,72.3){\small{$+$}}
  \put(14,52){\small{$+$}}
  \put(72,68){\small{$\beta$}}
  \put(71,15){\small{$\alpha$}}
  \end{overpic}
  \caption{A bypass}
\end{figure}

\begin{rmk}
One can easily decrease the Thurston-Bennequin invariant by stabilizing a Legendrian arc. However, the converse is not always possible in a contact manifold. Observe that in the definition of a bypass, we need to increase the Thurston-Bennequin invariant by 1. Hence most bypasses do not come for free. In this paper, we do not worry about the existence of bypasses because we will attach bypasses from outside of the contact manifold.
\end{rmk}

Given a convex surface and a bypass as above, we now describe a bypass attachment.

\begin{lemma}[Honda] \label{bypasslem}
Assume $D$ is a bypass for a convex surface $\Sigma$. Then there exists a neighborhood of $\Sigma\cup D\subset M$ diffeomorphic to $\Sigma\times[0,1]$, such that $\Sigma_i=\Sigma\times\{i\},i=0,1$, are convex, and $\Gamma_{\Sigma_1}$ is obtained from $\Gamma_{\Sigma_0}$ by performing the bypass attachment operation depicted in Figure~\ref{BypassAttach} in a neighborhood of the attaching Legendrian arc $\alpha$.
\end{lemma}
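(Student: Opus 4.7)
The plan is to reduce the lemma to an explicit local model for a bypass-plus-convex-surface pair and then transfer the picture back to $M$ via Giroux's Flexibility Theorem~\ref{Flex}.

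First, I would normalize the characteristic foliation. Using Theorem~\ref{Flex}, perturb $\Sigma$ so that $\Sigma_\xi$ in a neighborhood of $\alpha$ takes a convenient Morse--Smale form divided by $\Gamma_\Sigma$, with $\alpha$ a Legendrian arc meeting $\Gamma_\Sigma$ transversely at $p_1,p_2,p_3$. Simultaneously adjust the characteristic foliation on $D$ so that $\bdry D=\alpha\cup\beta$ carries the prescribed singular-point structure: positive elliptic at $p_1,p_3$, negative elliptic at $p_2$, and the alternating positive elliptic/hyperbolic singularities along $\beta$. This standardization is what permits replacing $(\Sigma,D)$ by an explicit local model without loss of generality.

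Next, I would construct the local model inside a Darboux ball in standard contact $\mathbb{R}^3$. Write down a convex piece $\widetilde\Sigma$ together with a convex half-disk $\widetilde D$ attached along $\widetilde\alpha=\widetilde\Sigma\cap\widetilde D$, realizing the normalized characteristic foliations from the first step. Choose a contact vector field $v$ transverse to $\widetilde\Sigma$ and extending to a neighborhood of $\widetilde\Sigma\cup\widetilde D$. Define $\widetilde\Sigma_1$ by pushing $\widetilde\Sigma$ across $\widetilde D$ along the flow of $v$ and rounding the resulting corners along $\alpha\cup\beta$. The flow of $v$ identifies a neighborhood of $\widetilde\Sigma\cup\widetilde D$ with $\widetilde\Sigma\times[0,1]$, with $\widetilde\Sigma=\widetilde\Sigma\times\{0\}$ and $\widetilde\Sigma_1=\widetilde\Sigma\times\{1\}$.

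The third step is to verify that $\widetilde\Sigma_1$ is convex and to compute $\Gamma_{\widetilde\Sigma_1}$. After a controlled modification of $v$ in a collar of the smoothed corners, it remains a contact vector field transverse to $\widetilde\Sigma_1$, making the latter convex. The new dividing set can then be read off directly from the characteristic foliation: outside a neighborhood of $\alpha$ it agrees with $\Gamma_{\widetilde\Sigma}$, while near $\alpha$ the three arcs of $\Gamma_{\widetilde\Sigma}$ crossing $\alpha$ get recombined into precisely the configuration shown in Figure~\ref{BypassAttach}. Transfer is then immediate: by Theorem~\ref{Flex} again, a neighborhood of $\widetilde\Sigma\cup\widetilde D$ in the model embeds contactly into a neighborhood of $\Sigma\cup D$ in $M$, carrying $\widetilde\Sigma\times[0,1]$ to the desired $\Sigma\times[0,1]$ and $\Gamma_{\widetilde\Sigma_1}$ to $\Gamma_{\Sigma_1}$.

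The main technical obstacle is the corner-rounding used to build $\widetilde\Sigma_1$: one must check that after smoothing the edges of $\widetilde\Sigma\cup\widetilde D$ along $\alpha$ and $\beta$, the resulting surface is still transverse to a contact vector field and that its dividing set does not drift away from the prescribed picture. The normalization performed in the first step is exactly what makes this controllable, since it places the corner-smoothing inside a chart where the contact form is explicit and the transverse flow can be adjusted by hand.
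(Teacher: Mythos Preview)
The paper does not actually prove Lemma~\ref{bypasslem}; it is stated as a result of Honda and cited to \cite{Ho1,Ho2}. What follows the statement in the paper is not a proof but a two-sentence practical description of the construction: thicken the bypass disk $D$ to $D\times[-\epsilon,\epsilon]$ with an $I$-invariant contact structure, glue it onto $\Sigma$, and round the corners of $\Sigma\cup(D\times[-\epsilon,\epsilon])$; the details are deferred to Section~3, where this is carried out in an explicit coordinate model for the purposes of the Pontryagin--Thom computation.

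Your proposal, by contrast, is an outline of an actual proof, and it follows the standard route (essentially Honda's original argument): normalize the characteristic foliations on $\Sigma$ and $D$ via Giroux flexibility, build an explicit local model in a Darboux chart, push $\widetilde\Sigma$ across $\widetilde D$ along a transverse contact flow and round corners, then read off the new dividing set and transfer back. This is correct in spirit and more complete than anything the paper offers for this lemma. The one place where your description diverges slightly from the paper's sketch is the mechanism for producing $\Sigma\times[0,1]$: you flow $\Sigma$ across $D$, whereas the paper thickens $D$ in the normal direction and glues it onto an invariant neighborhood of $\Sigma$. These are equivalent constructions, and the corner-rounding step (which you correctly flag as the technical crux) is the same in both. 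Since the paper simply quotes the lemma, there is no meaningful comparison of proofs to make beyond this.
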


\begin{figure}[h]
  \begin{overpic}[scale=.55]{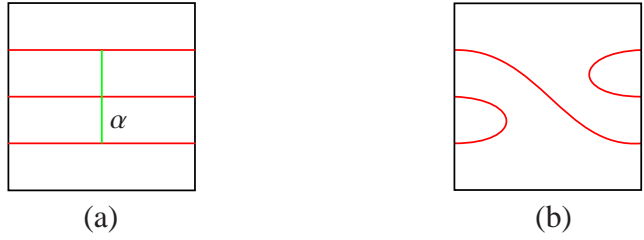}
  \put(16.2,10){\small{$\alpha$}}
  \put(12,-5.5){(a)}
  \put(83,-5.5){(b)}
  \end{overpic}
  \newline
  \caption{Bypass attachment: (a) dividing curves on $\Sigma_0$ and the Legendrian arc of attachment $\alpha$; (b) dividing curves on $\Sigma_1$.}
  \label{BypassAttach}
\end{figure}

In practice, we construct a neighborhood of $\Sigma \cup D$ with a contact structure given by the bypass attachment as follows. Let $D\times[-\epsilon,\epsilon]$ be a thickening of $D$ with an invariant contact structure in the $[-\epsilon,\epsilon]$-direction, where $\epsilon>0$ is small. Then a neighborhood of $\Sigma \cup D$ can be obtained by rounding the corners of $\Sigma \cup (D\times[-\epsilon,\epsilon])$. A more precise construction will be given in Section 3.

\section{The Pontryagin-Thom construction}

\subsection{The Pontryagin-Thom construction for closed manifolds}

The Pontryagin-Thom construction is designed to study homotopy types of smooth maps $f:M\to S^n$, where $M$ is a closed manifold. The idea is that instead of working with maps between manifolds, we study framed submanifolds of $M$ associated with these maps and framed cobordism between them. Throughout this paper, we always assume $M$ is 3-dimensional and $n=2$.

Fix a Riemannian metric on $M$. Let $L \subset M$ be a link. A {\em framing} of $L$ is the homotopy class of a smooth function $\sigma$ which assigns to each point $x\in L$ a basis $\{v_1(x),v_2(x)\}$ of the orthogonal complement of $T_xL$ in $T_xM$. We call the pair $(L,\sigma)$ a {\em framed link}. Two framed links $(L,\sigma)$ and $(L',\sigma')$ are {\em framed cobordant} if there exists a framed
surface $(\Sigma,\delta)$ in the 4-manifold $M\times [0,1]$ such that $(\Sigma,\delta)|_{M\times 0}=(L,\sigma)$ and $(\Sigma,\delta)|_{M\times1}=(L',\sigma')$, where the framing $\delta$ is the homotopy class of a smooth function which assigns to each point $y\in\Sigma$ a basis of the orthogonal complement of $T_y\Sigma$ in $T_y(M\times[0,1])$.

The main result of Pontryagin-Thom construction is the following theorem. See Chapter 7 of~\cite{Mi} for more details.

\begin{thm} \label{PTconstr}
Let $M$ be a closed 3-manifold. Then there exists a one-to-one correspondence

\center\{smooth maps $f:M \to S^2$ up to homotopy\} $\xleftrightarrow{~~1-1~~}$ \{framed links in $M$ up to framed cobordism\}.

\end{thm}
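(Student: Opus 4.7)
The plan is to construct maps in both directions and verify that they descend to the stated quotient sets and are mutually inverse. In one direction, given a smooth $f: M \to S^2$, I would use Sard's theorem to pick a regular value $p \in S^2$, set $L = f^{-1}(p)$ (a smooth 1-submanifold, hence a link since $M$ is 3-dimensional), and define a framing $\sigma$ on $L$ by pulling back any chosen basis of $T_p S^2$ via the isomorphism $df_x : (T_x L)^\perp \to T_p S^2$ for each $x \in L$. In the other direction, given a framed link $(L, \sigma)$, I would use $\sigma$ together with a tubular neighborhood theorem to produce a diffeomorphism $\nu(L) \cong L \times D^2$, then define $\Phi_{(L,\sigma)}: M \to S^2$ by sending $(x, v) \in L \times D^2$ to $v/\|v\| \cdot \pi/2$ under the identification $D^2 / \partial D^2 \cong S^2$ (concretely, collapse the exterior of $\nu(L)$ to the south pole and map the normal disk to $S^2$ radially), and verify smoothness by a standard bump-function argument.

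For well-definedness of the forward map, I would show that any two regular values $p, p' \in S^2$ yield framed cobordant links: connect them by a smooth path $p_t$ avoiding the critical values of $f$ (possible because critical values form a set of measure zero and we can homotope the path slightly by Sard), then $f^{-1}(\{p_t\}) \subset M$ is a framed cobordism in $M \times [0,1]$ after a small perturbation to make $f \times \mathrm{id}$ transverse to the path. Next, if $f_0 \simeq f_1$ via $F : M \times [0,1] \to S^2$, pick a common regular value $p$ of $f_0, f_1$, and $F$ (again by Sard); then $F^{-1}(p)$ is a framed surface providing the desired framed cobordism between $(f_0^{-1}(p), \sigma_0)$ and $(f_1^{-1}(p), \sigma_1)$.

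For well-definedness of the reverse map, I would check that the map $\Phi_{(L,\sigma)}$ is independent of the choice of tubular neighborhood up to homotopy (uniqueness of tubular neighborhoods), and that a framed cobordism $(\Sigma, \delta) \subset M \times [0,1]$ yields a homotopy between $\Phi_{(L_0, \sigma_0)}$ and $\Phi_{(L_1, \sigma_1)}$ by running the same collapse construction on a tubular neighborhood of $\Sigma$ in $M \times [0,1]$. Finally, to see that the two constructions are mutually inverse, I would observe that starting from $(L, \sigma)$, the map $\Phi_{(L,\sigma)}$ has the north pole of $S^2$ as a regular value with preimage $L$ and pulled-back framing $\sigma$ by construction; conversely, starting from $f$ and forming $\Phi_{(f^{-1}(p), \sigma)}$, a straight-line homotopy (after identifying a small disk around $p$ with $D^2$ and rescaling radially) produces a homotopy back to $f$, the key point being that away from $f^{-1}(p)$ both maps can be continuously deformed to the constant map at the south pole.

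The main obstacle I anticipate is being careful with transversality and the smoothness of the collapse construction, especially ensuring that the map $\Phi_{(L,\sigma)}$ extends smoothly from the open tubular neighborhood to the rest of $M$ (solved by damping the radial map through a smooth cutoff so that it equals the south pole outside a slightly larger neighborhood), and verifying that all the homotopies and cobordisms constructed are compatible with the framings. The rest is a matter of organizing the choices and invoking uniqueness of tubular neighborhoods together with Sard's theorem; the full bookkeeping is carried out in Chapter 7 of \cite{Mi}, so I would cite that reference for the remaining verifications rather than reproducing them.
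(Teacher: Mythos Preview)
Your proposal is correct and follows essentially the same approach as the paper's sketch: both directions of the correspondence are built exactly as you describe (regular-value preimage with pulled-back framing one way, tubular-neighborhood collapse to a base point the other way), and both you and the paper defer the remaining verifications to Chapter~7 of \cite{Mi}. If anything, you supply more of the well-definedness and mutual-inverse arguments than the paper does, which simply asserts that ``one can show'' the constructions establish the bijection.
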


\begin{proof}[Sketch of proof]
To construct a framed link in $M$ from a smooth map $f:M \to S^2$, let $p \in S^2$ be a regular value of $f$. By choosing a basis $\{v_1,v_2\}$ of $T_pS^2$, we obtain a framed link $(L_{f,p},\sigma_{f,p})$ in $M$, where $L_{f,p}=f^{-1}(p)$ and $\sigma_{f,p}(x)$ is the pull-back of $\{v_1,v_2\}$ via the isomorphism $f_\ast:T_xL^\bot \to T_pS^2,\forall x \in L$.

Conversely, let $(L,\sigma)$ be a framed link in $M$. Identify an open tubular neighborhood $N(L)$ of $L$ with $L\times \mathbb{R}^2$ via $\sigma$. Choose a smooth map $\phi:\mathbb{R}^2 \to S^2$ which maps every $x$ with $||x|| \geq 1$ to a base point $y \in S^2$, and maps the open unit disk $||x||<1$ diffeomorphically\footnote{For example, $\phi(x)=\pi^{-1}(x/\lambda(||x||^2))$, where $\pi$ is the stereographic projection from $y$ and $\lambda$ is a smooth monotone function with $\lambda(t)>0$ for $t<1$ and $\lambda(t)=0$ for $t \geq 1$.} onto $S^2\setminus\{y\}$. We define a smooth map $f:M \to S^2$ in two steps. First we define $f|_{N(L)}:N(L) \simeq L\times \mathbb{R}^2 \xrightarrow{\pi_2} \mathbb{R}^2 \xrightarrow{\phi} S^2$, where $\pi_2:L\times \mathbb{R}^2 \to \mathbb{R}^2$ is the projection onto the second factor. Then we extend $f|_{N(L)}$ to $f:M \to S^2$ by the constant map $f|_{M \setminus N(L)} \equiv y \in S^2$.

One can show that the above construction in both directions establishes the desired one-to-one correspondence.
\end{proof}

\begin{defn}
Given a smooth map $f:M \to S^2$, we call the framed link $(L_{f,p},\sigma_{f,p})$ constructed above the {\em Pontryagin submanifold associated with $f$}.
\end{defn}

\begin{rmk}
Although the construction of $(L_{f,p},\sigma_{f,p})$ depends on the choice of $p$, its framed cobordism class does not. Compare with the relative Pontryagin-Thom construction discussed in Section~\ref{relPTsection}.
\end{rmk}

However, Theorem~\ref{PTconstr} is still not satisfactory for our purposes because we will be working with contact manifolds with boundary. Before we generalize the Pontryagin-Thom construction to manifolds with boundary, we look at a simple application of Theorem~\ref{PTconstr} which defines the Hopf invariant.

In \cite{H}, Hopf constructed the well-known {\em Hopf map} $\zeta:S^3\to S^2$ using Clifford parallels and showed that
$\zeta$ is essential, i.e., $\zeta$ is not homotopic to a constant map. Applying Theorem~\ref{PTconstr}, we compute the homotopy group $\pi_3(S^2)$ of $S^2$, also known as the {\em Hopf invariant}. It turns out that $\zeta$ corresponds to a generator of $\pi_3(S^2)$.

\begin{lemma} \label{Hopf}
There exists an isomorphism $h:\pi_3(S^2) \xrightarrow{\sim} \mathbb{Z}$
\end{lemma}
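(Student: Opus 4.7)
The plan is to apply Theorem~\ref{PTconstr} with $M=S^3$ to identify $\pi_3(S^2)$ with the set of framed cobordism classes of framed links in $S^3$, and then to construct an integer-valued invariant via linking numbers. Namely, for a framed link $(L,\sigma)\subset S^3$, let $L_\sigma$ denote a parallel pushoff of $L$ along the first vector field of $\sigma$; since $L\cap L_\sigma=\emptyset$, the linking number $h(L,\sigma):=lk(L,L_\sigma)\in\mathbb{Z}$ is well defined. To see $h$ descends to framed cobordism classes, let $(\Sigma,\delta)\subset S^3\times[0,1]$ be a framed cobordism between $(L_0,\sigma_0)$ and $(L_1,\sigma_1)$; a generic pushoff $\Sigma_\delta$ along the first vector of $\delta$ is disjoint from $\Sigma$ with boundary $L_{0,\sigma_0}\sqcup(-L_{1,\sigma_1})$, and the vanishing of the algebraic intersection $\Sigma\cdot\Sigma_\delta=0$, combined with the standard relation between linking in $S^3$ and intersection in $S^3\times[0,1]$, forces $lk(L_0,L_{0,\sigma_0})=lk(L_1,L_{1,\sigma_1})$. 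Additivity of $h$ under disjoint union follows from bilinearity of $lk$, so $h$ defines a group homomorphism $\pi_3(S^2)\to\mathbb{Z}$.

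Surjectivity is immediate: the unknot in $S^3$ equipped with the Seifert framing twisted $n$ times realizes $h=n$. For injectivity, suppose $h(L,\sigma)=0$; I aim to exhibit a framed null-cobordism of $(L,\sigma)$ in $S^3\times[0,1]$. First, attach oriented bands (framed 1-handles) inside $S^3\times[0,1]$ to cobord $(L,\sigma)$ to a single framed knot $(K,m)$; since $h$ is a cobordism invariant, $m=h(L,\sigma)=0$. Then take an oriented Seifert surface $F$ for $K$, push it slightly into $S^3\times(0,1]$, and frame its trivial normal bundle by the inward Seifert normal together with the $I$-direction; at $\bdry F=K$ this framing agrees up to homotopy with the given $\sigma$ precisely because $h(K,0)=0$. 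The capped-off framed surface is the desired null-cobordism, and we conclude $h:\pi_3(S^2)\xrightarrow{\sim}\mathbb{Z}$.

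The main obstacle will be the framed surgery step inside the injectivity argument: one must verify carefully that the bands can be attached as framed cobordisms extending $\sigma$, and that the Seifert surface construction supplies a framing compatible with $\sigma$ at the boundary. Both reduce to the fact that the only obstruction to extending a trivialization of the $\mathbb{R}^2$-normal bundle across an orientable surface in $S^3\times[0,1]$ lives in $H^2$ with $\mathbb{Z}$-coefficients and is detected exactly by the boundary linking invariant $h$; thus the hypothesis $h(L,\sigma)=0$ makes the extension unobstructed. Once this is in place, the bijectivity of $h$ follows and supplies the asserted isomorphism.
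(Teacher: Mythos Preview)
Your proof is correct and follows essentially the same approach as the paper: apply Theorem~\ref{PTconstr} with $M=S^3$ to identify $\pi_3(S^2)$ with framed cobordism classes of framed links, and define $h$ via the self-linking number $lk(L,L_\sigma)$. The paper's proof simply asserts that ``it is easy to verify that $h$ is well-defined and is an isomorphism,'' whereas you have supplied the standard details (cobordism invariance via intersection numbers, surjectivity via twisted unknots, injectivity via band moves and Seifert surfaces), so your write-up is in fact more complete.
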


\begin{proof}
Since any continuous map $f:S^3 \to S^2$ can be approximated by a smooth map, we can assume that the elements in $\pi_3(S^2)$ are represented by smooth maps. Now it follows immediately from Theorem~\ref{PTconstr} that $\pi_3(S^2)=\{(L,\sigma)\}/\sim$, where $(L,\sigma)\sim(L',\sigma')$ if and only if they are framed cobordant. The group structure on $\{(L,\sigma)\}/\sim$ is defined by $[(L_1,\sigma_1)]+[(L_2,\sigma_2)]=[(L_1 \sqcup L_2,\sigma_1 \sqcup \sigma_2)]$ and $-[(L,\sigma)]=[(L,-\sigma)]$. If $\tilde L$ is a parallel copy of $L$ given by the framing $\sigma$, then we define $n(L,\sigma)$ to be the self-linking number $lk(L,\tilde L)$. Now we define the group homomorphism $h:\pi_3(S^2) \to \mathbb{Z}$ by sending $[(L,\sigma)]$ to $n(L,\sigma)$. It is easy to verify that $h$ is well-defined and is an isomorphism.
\end{proof}

\subsection{The Pontryagin-Thom construction for manifolds with boundary} \label{relPTsection}

Let $M$ be a compact 3-manifold with boundary. Let $f:M \to S^2$ be a smooth map and $p \in S^2$ be a regular value of $f$. The {\em Pontryagin submanifold} $(f^{-1}(p),\sigma_{f,p})$ associated with the pair $(f,p)$ is a framed 1-dimensional submanifold of $M$, i.e., it is the disjoint union of a framed link and a finite collection of framed arcs with endpoints contained in $\bdry M$. Two framed 1-dimensional submanifolds $(L,\sigma)$ and $(L',\sigma')$ of $M$ are {\em relatively framed cobordant} if there exists a framed surface $(\Sigma,\delta)$ in $M\times[0,1]$ such that (i) $(\Sigma,\delta)|_{M\times\{0\}}=(L,\sigma)$, (ii) $(\Sigma,\delta)|_{M\times\{1\}}=(L',\sigma')$, and (iii) $(\Sigma,\delta)|_{\bdry M\times\{t\}}=(L,\sigma)|_{\bdry M\times\{0\}}=(L',\sigma')|_{\bdry M\times\{1\}}$ for any $t\in[0,1]$. We have the following theorem which can be viewed as the relative analogue of Theorem~\ref{PTconstr}.

\begin{thm} \label{relPTconstr}
Let $M$ be a compact 3-manifold with boundary. If $f,f':M \to S^2$ are smooth maps such that $f|_{\bdry M}=f'|_{\bdry M}$, then $f$ is homotopic to $f'$ relative to the boundary if and only if for any $p\in S^2$ which is a common regular value of $f$ and $f'$, $(f^{-1}(p),\sigma_{f,p})$ is relatively framed cobordant to $(f'^{-1}(p),\sigma_{f',p})$.
\end{thm}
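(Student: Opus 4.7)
The plan is to adapt the two-direction proof of Theorem~\ref{PTconstr} to the relative setting, maintaining boundary control throughout.

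For the forward direction, suppose $F:M\times[0,1]\to S^2$ is a homotopy rel $\bdry M$ with $F(\cdot,0)=f$ and $F(\cdot,1)=f'$. Given a common regular value $p$ of $f$ and $f'$, I would first apply Sard's theorem on $F$ within a small neighborhood of $p$ on $S^2$ to find $p'$ arbitrarily close to $p$ that is regular for $F$, $f$, $f'$ simultaneously. (One can then replace $p'$ by $p$ by a small ambient isotopy of $S^2$ and pull back.) The preimage $F^{-1}(p)$ is then a properly embedded framed surface $(\Sigma,\delta)$ in $M\times[0,1]$, with framing pulled back from a fixed basis of $T_pS^2$. Its restrictions to the ends give $(f^{-1}(p),\sigma_{f,p})$ and $(f'^{-1}(p),\sigma_{f',p})$. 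Because $F$ is independent of $t$ on $\bdry M\times[0,1]$, the intersection $\Sigma\cap(\bdry M\times[0,1])$ is exactly $(f^{-1}(p)\cap\bdry M)\times[0,1]$ with framing pulled back from the same basis of $T_pS^2$, verifying condition (iii) of relative framed cobordism.

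For the reverse direction, given a relative framed cobordism $(\Sigma,\delta)$ in $M\times[0,1]$, I would apply the inverse Pontryagin-Thom construction from the proof sketch of Theorem~\ref{PTconstr}: pick a tubular neighborhood $N(\Sigma)\simeq\Sigma\times\mathbb{R}^2$ determined by $\delta$, and define $\widetilde F:M\times[0,1]\to S^2$ via $\phi\circ\pi_2$ on $N(\Sigma)$ and the constant value $y$ on the complement. The product structure in condition (iii) forces $N(\Sigma)$ to be of product form over $\bdry M\times[0,1]$, so $\widetilde F$ is constant in $t$ on $\bdry M\times[0,1]$; hence $\widetilde F$ is a homotopy rel $\bdry M$ between its two endpoint restrictions $\widetilde F_0,\widetilde F_1$, whose Pontryagin submanifolds are exactly $(f^{-1}(p),\sigma_{f,p})$ and $(f'^{-1}(p),\sigma_{f',p})$.

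The main obstacle is then to show $\widetilde F_0\simeq f$ and $\widetilde F_1\simeq f'$ \emph{relative to the boundary}, so that we can concatenate to produce the desired homotopy $f\simeq\widetilde F_0\simeq\widetilde F_1\simeq f'$ rel $\bdry M$. This reduces to a relative normal-form lemma: any smooth $g:M\to S^2$ whose Pontryagin data at $p$ is $(L,\sigma)$ is homotopic rel $\bdry M$ to the standard map constructed from $(L,\sigma)$ by the inverse procedure. Following the closed-case argument of Chapter 7 of \cite{Mi}, the proof proceeds by first isotoping $g$ so that it agrees with $\phi\circ\pi_2$ on a tubular neighborhood of $L$ realizing $\sigma$, and then using the contractibility of $S^2\setminus\{p\}$ to $y$ to straighten the complement to the constant map $y$; the key point for us is that the classical argument is local near $L$ and near $y$, so by choosing the tubular neighborhood disjoint from $\bdry M$ away from $L\cap\bdry M$ and by working with boundary-preserving isotopies (via the standard relative form of the isotopy extension theorem), every step can be carried out rel $\bdry M$. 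Combining the two concatenation steps completes the proof.
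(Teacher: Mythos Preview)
Your forward direction is essentially the paper's argument and is fine.

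The reverse direction has a genuine gap in the ``relative normal-form lemma.'' You assert that any $g:M\to S^2$ with Pontryagin data $(L,\sigma)$ is homotopic \emph{rel $\bdry M$} to the standard map built from $(L,\sigma)$ by the inverse construction. But a rel-$\bdry M$ homotopy presupposes that the two maps agree on $\bdry M$, and in general they do not: the standard map is the constant $y$ on $\bdry M\setminus N(L\cap\bdry M)$, whereas $g|_{\bdry M}$ is an arbitrary map with the given framed $0$-dimensional Pontryagin data. Concretely, your step ``use the contractibility of $S^2\setminus\{p\}$ to straighten the complement to the constant $y$'' necessarily moves the values of $g$ on $\bdry M\setminus N(L)$, so it cannot be carried out rel $\bdry M$; the relative isotopy extension theorem controls diffeomorphisms of the domain, not homotopies of the target values. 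As a consequence your concatenation $f\simeq\widetilde F_0\simeq\widetilde F_1\simeq f'$ does not even typecheck, since $f|_{\bdry M}\neq\widetilde F_0|_{\bdry M}$.

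The paper repairs exactly this point with a collar trick. One takes a collar $\bdry M\times[-1,0]\subset M$, performs the inverse Pontryagin--Thom construction only on the inner piece $\tilde M=\overline{M\setminus(\bdry M\times[-1,0])}$ to get $H_1:\tilde M\times[0,1]\to S^2$, observes that for each $t$ the restriction $H_1|_{\bdry\tilde M\times\{t\}}$ is homotopic to $f|_{\bdry M}$ (closed-case Pontryagin--Thom on the surface $\bdry M$), and then spends the collar realizing that homotopy, producing $H_2:\bdry M\times[-1,0]\times[0,1]\to S^2$. The glued map $H=H_1\cup H_2$ now satisfies $H|_{\bdry M\times\{t\}}=f|_{\bdry M}$ on the nose, after which $H|_{M\times\{0\}}\simeq f$ and $H|_{M\times\{1\}}\simeq f'$ rel $\bdry M$ make sense and follow from the closed-case argument. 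Your outline becomes correct if you insert this collar step before appealing to the normal-form lemma.
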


\begin{proof}
Let $H:M\times[0,1] \to S^2$ be a homotopy between $f$ and $f'$ relative to the boundary. Generically we can assume $p\in S^2$ is also a regular value of $H$. Hence the Pontryagin submanifold $(H^{-1}(p),\delta_H)$ defines a relative framed cobordism between $(f^{-1}(p),\sigma_{f,p})$ and $(f'^{-1}(p),\sigma_{f',p})$.

Conversely, let $(\Sigma,\delta) \subset M\times[0,1]$ be a relative framed cobordism between $(f^{-1}(p),\sigma_{f,p})$ and $(f'^{-1}(p),\sigma_{f',p})$. Let $\bdry M\times[-1,0] \subset M$ be a collar neighborhood of $\bdry M$ where $\bdry M\times\{0\}$ is identified with $\bdry M$, and $\tilde M$ be the metric closure of $M\setminus (\bdry M\times[-1,0])$. Abusing notation, we shall write $\Sigma$ for $\Sigma\cap(\tilde M\times[0,1])$. As in the proof of Theorem~\ref{PTconstr}, we identify an open tubular neighborhood $N(\Sigma)$ of $\Sigma$ in $\tilde M\times[0,1]$ with $\Sigma\times \mathbb{R}^2$ via $\delta$, and define a smooth map $H_1:\tilde M\times[0,1] \to S^2$ by (i) $H_1|_{N(\Sigma)}:N(\Sigma)\simeq\Sigma\times \mathbb{R}^2 \xrightarrow{\pi_2} \mathbb{R}^2 \xrightarrow{\phi} S^2$ where $\pi_2:\Sigma\times \mathbb{R}^2 \to \mathbb{R}^2$ is the projection onto the second factor, and (ii) $H_1|_{\tilde M\setminus N(\Sigma)} \equiv y \in S^2$. Observe that $H_1|_{\bdry \tilde M\times\{t\}}: \bdry \tilde M\times\{t\} \to S^2$ is homotopic to $f:\bdry M\times\{t\} \to S^2$ for any $t\in[0,1]$, and let $H_2^t:\bdry M\times[-1,0]\times\{t\} \to S^2$ be the homotopy, i.e., $H_2^t|_{s=-1}=H_1|_{\bdry \tilde M\times\{t\}}$ and $H_2^t|_{s=0}=f$, where $s\in[-1,0]$. Define $H_2: \bdry M\times[-1,0]\times[0,1]$ by $H_2(x,s,t)=H_2^t(x,s)$ for $x\in\bdry M$, $s\in[-1,0]$ and $t\in[0,1]$. We construct a map $H:M\times[0,1] \to S^2$ by gluing $H_1$ and $H_2$ along $\bdry M\times\{-1\}\times[0,1]$ which satisfies $H|_{\bdry M\times\{t\}}=f|_{\bdry M}=f'|_{\bdry M}$ for any $t\in[0,1]$. One can verify that $H|_{M\times\{0\}}$ and $H|_{M\times\{1\}}$ are homotopic to $f$ and $f'$ relative to the boundary, respectively, as in the closed case. Hence the conclusion follows.
\end{proof}

\begin{cor}
Let $f,f':M \to S^2$ be smooth maps such that $f|_{\bdry M}=f'|_{\bdry M}$. If $(f^{-1}(p),\sigma_{f,p})$ is relatively framed cobordant to $(f'^{-1}(p),\sigma_{f',p})$ for some common regular value $p$ of $f$ and $f'$, then the same holds for all common regular values of $f$ and $f'$.
\end{cor}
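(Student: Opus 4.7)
The strategy is to chain together the two directions of Theorem~\ref{relPTconstr}. Although the theorem's statement quantifies over all common regular values, its converse direction is actually proved by starting from a relative framed cobordism at a \emph{single} regular value $p$ and producing a homotopy $H:M\times[0,1]\to S^2$ between $f$ and $f'$ relative to $\bdry M$. Thus the hypothesis of the corollary---the existence of a relative framed cobordism at one common regular value $p$---already suffices to invoke that construction and produce a homotopy $H$ rel $\bdry M$ from $f$ to $f'$.

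With this $H$ in hand, I would then apply the forward direction of Theorem~\ref{relPTconstr} at any other common regular value $q$ of $f$ and $f'$. The only point requiring attention is to arrange for $q$ to be a regular value of $H$ itself. Since $q$ is already a regular value of $H|_{M\times\{0\}}=f$ and $H|_{M\times\{1\}}=f'$ by hypothesis, and since $H$ is constant in a collar neighborhood of $\bdry M\times[0,1]$ by its construction, a standard relative transversality argument allows one to perturb $H$ in the interior of $M\times(0,1)$ to achieve $q$ as a regular value of $H$ without disturbing the rel-boundary conditions or the behavior at $t=0,1$. Then $(H^{-1}(q),\delta_H)$ is the desired relative framed cobordism between $(f^{-1}(q),\sigma_{f,q})$ and $(f'^{-1}(q),\sigma_{f',q})$.

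I do not anticipate any genuine obstacle here: the corollary is essentially the observation that being relatively framed cobordant at \emph{some} regular value and at \emph{every} common regular value are equivalent conditions, both being captured by the existence of a homotopy rel $\bdry M$ via Theorem~\ref{relPTconstr}. The only ingredient beyond a direct citation of the theorem is the small relative transversality lemma used to make $q$ a regular value of $H$, which is entirely routine.
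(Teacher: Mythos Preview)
Your proposal is correct and is exactly what the paper intends: its proof of the corollary reads in full ``This follows immediately from the proof of Theorem~\ref{relPTconstr},'' and you have spelled out precisely that immediacy---using the converse construction (which needs only a single regular value) to obtain the homotopy, and then the forward direction at any other $q$.
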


\begin{proof}
This follows immediately from the proof of Theorem~\ref{relPTconstr}.
\end{proof}

Hence in practice, in order to verify that $f$ is homotopic to $f'$ relative to the boundary, it suffices to check the framed cobordant condition for a preferred common regular value.

\begin{rmk}
One can easily generalize Theorem~\ref{relPTconstr} to arbitrary dimension using the same proof.
\end{rmk}

\subsection{The 3-dimensional obstruction class $o_3(\xi,\xi')$ of 2-plane field distributions}

Let $M$ be a compact oriented 3-manifold, and $\xi$ and $\xi'$ be two oriented 2-plane field distributions on $M$ such that $\xi=\xi'$ on $M\setminus B^3$ for a 3-ball $B^3\subset int(M)$. Fix a trivialization of $TM$. Let $G_\xi:M \to S^2$ and $G_{\xi'}:M \to S^2$ be the Gauss maps associated with $\xi$ and $\xi'$, respectively. Take a common regular value $p\in S^2$ of $G_\xi$ and $G_{\xi'}$, and let $(L,\sigma)$ and $(L',\sigma')$ be the Pontryagin submanifolds associated with $(G_\xi,p)$ and $(G_{\xi'},p)$, respectively, i.e., $L=G^{-1}_\xi(p)$ and $L'=G^{-1}_{\xi'}(p)$. By assumption, $(L,\sigma)=(L',\sigma')$ on $M\setminus B^3$. Hence we may focus on the relative framed cobordism classes of $(L,\sigma)|_{B^3}$ and $(L',\sigma')|_{B^3}$. Since $B^3$ is contractible, $L$ is always relatively cobordant to $L'$ but the framing may not extend to the cobordism. To fix this issue, let $C\subset int(B^3)$ be a trivial loop which does not link with $L'$. Observe that $(L,\sigma)$ is relatively framed cobordant to $(L' \sqcup C,\sigma' \sqcup \delta)$ in $B^3$ for some framing $\delta$ of $C$. If $C'$ is a parallel copy of $C$ given by $\delta$, then we define $n(C,\delta)$ to be the self-linking number $lk(C,C')$ with respect to the orientation of $B^3$ inherited from the orientation of $M$.

\begin{defn} \label{obsrtclass}
Let $\xi$ and $\xi'$ be oriented 2-plane field distributions on $M$ such that $\xi=\xi'$ on $M\setminus B^3$ for a 3-ball $B^3\subset M$. We define the 3-dimensional obstruction class $o_3(\xi,\xi')\in\mathbb{Z}/d(\xi)$ to be $n(C,\delta)$ as constructed above modulo $d(\xi)$, where $d(\xi)$ is the divisibility of the Euler class $e(\xi)\in H^2(M,\mathbb{Z})$.
\end{defn}

\begin{rmk}
One can think of $o_3(\xi,\xi')$ as a relative version of the Hopf invariant described in Lemma~\ref{Hopf}.
\end{rmk}

It is easy to see that the definition of $o_3(\xi,\xi')$ is independent of various choices involved, namely, the trivialization of $TM$, the 3-ball $B^3\subset M$, the trivial loop $C$ and the common regular value $p\in S^2$. The independence of the choice of common regular values is slightly nontrivial, so we prove this in the following lemma.

\begin{lemma}
The obstruction class $o_3(\xi,\xi')\in\mathbb{Z}/d(\xi)$ is independent of the choice of $p\in S^2$.
\end{lemma}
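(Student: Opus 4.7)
The plan is to compare $o_3(\xi,\xi')$ computed at two common regular values $p,q \in S^2$ by interpolating between them along a smooth path in $S^2$, using the preimage construction to produce framed cobordisms in $B^3$, and concatenating these with the cobordism realizing $o_3$ at $p$. The crucial point is that $\xi=\xi'$ outside $B^3$, so the Gauss maps agree on $\bdry B^3$; this creates a symmetry that lets us cancel out the non-product boundary trace of the concatenated cobordism.

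In more detail, by Sard's theorem and a transversality argument the set of common regular values of $G_\xi$ and $G_{\xi'}$ is open and dense in $S^2$, so we can choose a smooth embedded path $\gamma \colon [0,1] \to S^2$ from $p$ to $q$ consisting of common regular values, together with a continuous family of ordered bases of $T_{\gamma(t)}S^2$ extending the chosen bases at $p$ and $q$. The preimages $W_\xi = \{(x,t) \in B^3 \times [0,1] : G_\xi(x)=\gamma(t)\}$ and $W_{\xi'}$ defined analogously are framed cobordisms in $B^3 \times [0,1]$ from $(L_p \cap B^3, \sigma_p)$ to $(L_q \cap B^3, \sigma_q)$ and from $(L'_p \cap B^3, \sigma'_p)$ to $(L'_q \cap B^3, \sigma'_q)$, respectively. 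Since $G_\xi = G_{\xi'}$ on $\bdry B^3$, the traces of $W_\xi$ and $W_{\xi'}$ on $\bdry B^3 \times [0,1]$ coincide. Now let $\Sigma_p \subset B^3 \times [0,1]$ be a relative framed cobordism from $(L_p \cap B^3, \sigma_p)$ to $(L'_p \cap B^3 \sqcup C, \sigma'_p \sqcup \delta)$ realizing $o_3(\xi,\xi')$ at $p$. Stacking $W_\xi$ run backward on $[-1,0]$, $\Sigma_p$ on $[0,1]$, and $W_{\xi'}$ together with the trivial cylinder $C \times [1,2]$ on $[1,2]$ yields a framed surface $\Sigma \subset B^3 \times [-1,2]$ from $(L_q \cap B^3, \sigma_q)$ to $(L'_q \cap B^3 \sqcup C, \sigma'_q \sqcup \delta)$.

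The main obstacle is that $\Sigma$ is not yet a relative framed cobordism in the strict sense: its trace on $\bdry B^3 \times [-1,2]$ is not a product, instead tracing $\gamma$ backward on $[-1,0]$, staying constant on $[0,1]$, and tracing $\gamma$ forward on $[1,2]$. However, because the traces of $W_\xi$ and $W_{\xi'}$ on $\bdry B^3$ coincide, each connected arc of this boundary trace is a \emph{there-and-back} path, i.e.\ a loop in $\bdry B^3$ that retraces its own outbound leg and is manifestly null-homotopic rel endpoints. Performing this null-homotopy simultaneously (carrying the framing along) and extending to a collar-supported isotopy of $\Sigma$ near $\bdry B^3 \times [-1,2]$ produces a genuine relative framed cobordism realizing $o_3(\xi,\xi')$ at $q$ with the \emph{same} $C$ and framing $\delta$, since $C$ lies in the deep interior of $B^3$ and is untouched by the collar isotopy. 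Hence $n(C,\delta)$ is unchanged, and $o_3(\xi,\xi') \in \mathbb{Z}/d(\xi)$ is independent of the choice of common regular value.
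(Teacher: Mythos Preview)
Your argument takes a genuinely different route from the paper's. The paper proceeds by doubling: it forms the closed manifold $\hat M = M \cup_{\partial M} (-M)$, glues $G_\xi$ and $G_{\xi'}$ into a single map $\hat G:\hat M\to S^2$, and then invokes Proposition~4.1 of Gompf's \emph{Handlebody construction of Stein surfaces} to conclude $o_3^p(\xi,\xi')=o_3^q(\xi,\xi')$ in $\mathbb{Z}/d(\xi)$. You instead stay entirely inside $B^3$, transporting the cobordism that computes $o_3$ at $p$ over to $q$ by concatenating with the preimage surfaces $W_\xi$, $W_{\xi'}$ and cancelling the matching there-and-back boundary traces (which works precisely because $G_\xi|_{\partial B^3}=G_{\xi'}|_{\partial B^3}$). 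Your approach is more self-contained---no reduction to the closed case, no external citation---and in fact shows that the integer $n(C,\delta)$ itself, not just its residue mod $d(\xi)$, is unchanged; the paper's doubling has the virtue of reducing everything in one stroke to a known statement.

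There is one gap worth flagging. The inference ``the set of common regular values is open and dense, so we can choose a path $\gamma$ consisting of common regular values'' is not valid as written: an open dense subset of $S^2$ need not be path-connected (remove a great circle), and the critical-value set of a non-generic Gauss map can in principle separate $S^2$. Fortunately you need less than you claim. For $W_\xi$ and $W_{\xi'}$ to be smooth framed surfaces it suffices that the maps $(x,t)\mapsto (G_\xi(x),\gamma(t))$ and $(x,t)\mapsto (G_{\xi'}(x),\gamma(t))$ from $B^3\times[0,1]$ to $S^2\times S^2$ be transverse to the diagonal; by the parametric transversality theorem this holds for a generic path $\gamma$ (perturbed rel endpoints), even when intermediate points $\gamma(t)$ fail to be regular values. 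With that adjustment your stacking-and-cancellation argument goes through.
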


\begin{proof}
Let $\hat M=M \cup_{\bdry M} (-M)$ be a closed oriented 3-manifold, where $-M$ is $M$ with the opposite orientation. Glue $G_\xi$ and $G_{\xi'}$ along $\bdry M$ to obtain a smooth map $\hat{G}: \hat{M}\to S^2$ given by:

\begin{equation*}
\hat G(x) = \left\{
\begin{array}{rl}
G_\xi(x) & \text{if } x \in M,\\
G_{\xi'}(x) & \text{if } x \in -M.
\end{array} \right.
\end{equation*}

If $q\in S^2$ is another common regular value of $G_\xi$ and $G_{\xi'}$, then $p$ and $q$ are both regular values of $\hat{G}$. We write $o^p_3(\xi,\xi')$ (resp. $o^q_3(\xi,\xi')$) for the obstruction class to indicate the potential dependence on the choice of $p$ (resp. $q$). According to Proposition 4.1 in \cite{Go}, we have $o^p_3(\xi,\xi')-o^q_3(\xi,\xi')=0 \in \mathbb{Z}/d(\xi)$. Hence $o_3(\xi,\xi')$ is independent of the choice of $p$ modulo $d(\xi)$.
\end{proof}

Using the same argument as in proof of Proposition 4.1 in \cite{Go}, we also obtain the following result.

\begin{prop} \label{Obs}
If $\xi$ and $\xi'$ are two contact structures on $M$ such that $\xi|_{M\setminus B^3}=\xi'|_{M\setminus B^3}$ for some 3-ball $B^3 \subset int(M)$, then $\xi$ is homotopic to $\xi'$ relative to the boundary if and only if $o_3(\xi,\xi')=0 \in \mathbb{Z}/d(\xi)$.
\end{prop}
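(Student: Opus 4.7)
The plan is to use the relative Pontryagin-Thom correspondence (Theorem~\ref{relPTconstr}) to convert the homotopy question into framed cobordism of Pontryagin submanifolds, and then localize the obstruction to the ball $B^3$. Fix a trivialization of $TM$ and a common regular value $p\in S^2$ of $G_\xi$ and $G_{\xi'}$, and let $(L,\sigma)$ and $(L',\sigma')$ be the corresponding Pontryagin submanifolds, which agree outside $B^3$. By Theorem~\ref{relPTconstr}, $\xi$ is homotopic to $\xi'$ relative to $\bdry M$ if and only if $(L,\sigma)$ and $(L',\sigma')$ are relatively framed cobordant in $M\times[0,1]$. Since they already coincide outside $B^3$, the trivial product cobordism can be used there, and the nontrivial part of the cobordism lives in $B^3\times[0,1]$.

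For the forward direction, assume such a homotopy exists. By Theorem~\ref{relPTconstr} it produces a relative framed cobordism $(\Sigma,\tilde\delta)\subset M\times[0,1]$. By the construction in Definition~\ref{obsrtclass}, the trivial framed loop $C\subset B^3$ realizes $(L,\sigma)$ as relatively framed cobordant to $(L'\sqcup C,\sigma'\sqcup\delta)$ inside $B^3$; combining this with $(\Sigma,\tilde\delta)$ forces $C$ to bound a framed surface $F$ in $M\times[0,1]$. The self-linking $n(C,\delta)$ is exactly the obstruction to extending $\delta$ over such a bounding surface when $F$ is confined to $B^3\times[0,1]$; allowing $F$ to wander into $M\setminus B^3$ permits one to modify $n(C,\delta)$ by pairings of $e(\xi)$ against closed 2-cycles in $M$, whose GCD is $d(\xi)$ by definition. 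Consequently $n(C,\delta)\equiv 0 \pmod{d(\xi)}$, i.e., $o_3(\xi,\xi')=0$.

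For the converse, suppose $o_3(\xi,\xi')=0$, so $n(C,\delta)=k\cdot d(\xi)$ for some $k\in\mathbb{Z}$. By definition of divisibility, there exists a closed oriented surface $S\subset M$ with $\langle e(\xi),[S]\rangle=d(\xi)$. Push $k$ parallel copies of $S$ into $M\times[0,1]$ and band-sum $C$ to the empty set by a framed cobordism whose framing adjustment along these copies of $S$ compensates for the self-linking; this is the relative analogue of the framing-change move in the proof of Proposition 4.1 of \cite{Go}. Splicing this with the trivial cobordism on $M\setminus B^3$ yields a relative framed cobordism between $(L,\sigma)$ and $(L',\sigma')$, and one more application of Theorem~\ref{relPTconstr} produces the desired homotopy rel $\bdry M$. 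The main obstacle is the $d(\xi)$-reduction: one must verify that changing the framing of a trivial loop by a multiple of $d(\xi)$ is realized by an honest framed cobordism in $M\times[0,1]$, and conversely that no smaller integer can arise. This is precisely Gompf's obstruction-theoretic computation, which goes through here because $\xi=\xi'$ near $\bdry M$ and no boundary correction is needed.
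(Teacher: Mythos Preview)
Your proposal is correct and follows the same approach as the paper, which simply defers to Gompf's Proposition~4.1 in \cite{Go}; you have essentially unpacked that citation into an explicit sketch of the obstruction-theoretic argument. If anything, your write-up is more detailed than the paper's one-line reference, though the phrase ``band-sum $C$ to the empty set'' is a bit loose---what you mean is that the framed null-cobordism of $(C,\delta)$ is obtained by capping with a disk after tubing into $k$ copies of $S$ to correct the framing.
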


\section{Proof of Theorem~\ref{MainThm1} and Theorem~\ref{MainThm2}}

\subsection{Proof of Theorem~\ref{MainThm1}}

Now we are ready to compute the relative Pontryagin submanifold associated with the contact 3-manifold $(V,\xi\ast\sigma_\alpha)$ as constructed in Theorem~\ref{MainThm1}.

\begin{proof}[Proof of Theorem~\ref{MainThm1}]
Recall the manifold $V=[-3/4,3/4] \times [-1,1] \times [0,1] \subset \mathbb{R}^3$ with the contact structure $\xi=\ker \lambda$, where $\lambda=\cos(2\pi x)dy-\sin(2\pi x)dz$. Let $\Sigma_t=[-3/4,3/4] \times [-1,1] \times \{t\}$ be a foliation by convex surfaces with respect to the contact vector field $\bdry/\bdry z$ for $t \in [0,1]$. The dividing set $\Gamma_t$ on $\Sigma_t$, $t\in[0,1]$, is the disjoint union of three parallel intervals $(\{1/2\} \times [-1,1] \times \{t\}) \cup (\{0\} \times [-1,1] \times \{t\}) \cup (\{-1/2\} \times [-1,1] \times \{t\})$ which divide $\Sigma_t$ into positive and negative regions. Let $\alpha=[-1/2,1/2] \times \{0\} \times \{1\} \subset \Sigma_1$ be the Legendrian arc along which an $I$-invariant neighborhood of the bypass $D_\alpha=\{(x,y,z)~|~1 \leq z \leq 1+\sqrt{1/4-x^2},y=0\}$ is attached. We choose the characteristic foliation on $D_\alpha$ so that it is half of an overtwisted disk with one negative elliptic singular point at the center and alternating positive elliptic and hyperbolic singular points on the boundary, and the dividing set $\Gamma_{D_\alpha}$ is a semi-circle centered at $(0,0,1)$ with radius $1/4$. By gluing a $\bdry/\bdry y$-invariant neighborhood $D_\alpha\times[-\epsilon,\epsilon]$ of $D_\alpha$ for small $\epsilon>0$ to $(V,\xi)$, we obtain a contact manifold $(V_\alpha,\xi_\alpha)$ with corners where $V_\alpha= V \cup (D_\alpha \times [-\epsilon,\epsilon])$. Abusing notation, we also denote the contact manifold obtained by rounding corners on $D_\alpha \times [-\epsilon,\epsilon] \subset V_\alpha$ by $(V_\alpha,\xi_\alpha)$. By slightly tilting $D_\alpha\times\{-\epsilon\}$ and $D_\alpha\times\{\epsilon\}$, we can further assume that the $\bdry/\bdry z$-direction is transverse to $\bdry_+ V_\alpha$, the top boundary of $V_\alpha$. Observe that, up to isotopy, $\Gamma_{\bdry_+ V_\alpha}$ is as depicted in Figure~\ref{BypassAttach}(b).

Choose a non-positive smooth function $g: V_\alpha \to \mathbb{R}_{\leq 0}$ supported in a neighborhood of $D_\alpha \times [-\epsilon,\epsilon]$ such that the time-$1$ map $\phi_X^1$ of the flow of $X=g \bdry/\bdry z$ sends $V_\alpha$ diffeomorphically onto $V$. We identify $V_\alpha$ with $V$ via $\phi_X^1$, and we denote the contact structure $(\phi_X^1)_\ast(\xi_\alpha)$ by $\xi\ast\sigma_\alpha$, where $\xi\ast\sigma_\alpha$ is known as the contact structure obtain by attaching a bypass along $\alpha$ to $\xi$.

Next, we study the homotopy type of $(V,\xi\ast\sigma_\alpha)$ using the Pontryagin-Thom construction. Let $p=(1,0,0)\in S^2$ be a regular value of the Gauss map $G_{\xi\ast\sigma_\alpha}$ associated with $\xi\ast\sigma_\alpha$, where $TV$ is trivialized by the standard embedding $V \subset \mathbb{R}^3$. Observe that $p$ is also a regular value of the Gauss map $G_{\xi_\alpha}: V_\alpha \to S^2$ associated with $\xi_\alpha$. In order to keep track of the framing of $G^{-1}_{\xi_\alpha}(p)$, we fix another regular value $p'=(1-\delta,\sqrt{2\delta-\delta^2},0)\in S^2$ near $p$ for small $\delta>0$. It is easy to see that $G^{-1}_{\xi_\alpha}(p)$ and $G^{-1}_{\xi_\alpha}(p')$ are two parallel arcs with endpoints contained in $D_\alpha\times\{-\epsilon,\epsilon\}$\footnote{Remember that $D_\alpha\times\{-\epsilon,\epsilon\}$ is slightly tilted so that it is transverse to the $\bdry/\bdry z$-direction.} as depicted in Figure~\ref{bypass1}(a). Without loss of generality, we can assume that the endpoints of $G^{-1}_{\xi_\alpha}(p)$ and $G^{-1}_{\xi_\alpha}(p')$ are contained in the dividing set $\Gamma_{D_\alpha\times\{-\epsilon,\epsilon\}}$. Note that $G_{\xi_\alpha}(x)$ is contained in the unit circle $S^1=\{z=0\} \subset S^2$ if and only if the same holds for $G_{\xi\ast\sigma_\alpha}(\phi^1_X(x))$. By applying the diffeomorphism $\phi^1_X:V_\alpha \to V$, we obtain the Pontryagin submanifold $G^{-1}_{\xi\ast\sigma_\alpha}(p)$ associated with $\xi\ast\sigma_\alpha$ with framing given by $G^{-1}_{\xi\ast\sigma_\alpha}(p')$ as depicted in Figure~\ref{bypass1}(b). This finishes the proof of Theorem~\ref{MainThm1}.

\begin{figure}[h]
  \begin{overpic}[scale=.65]{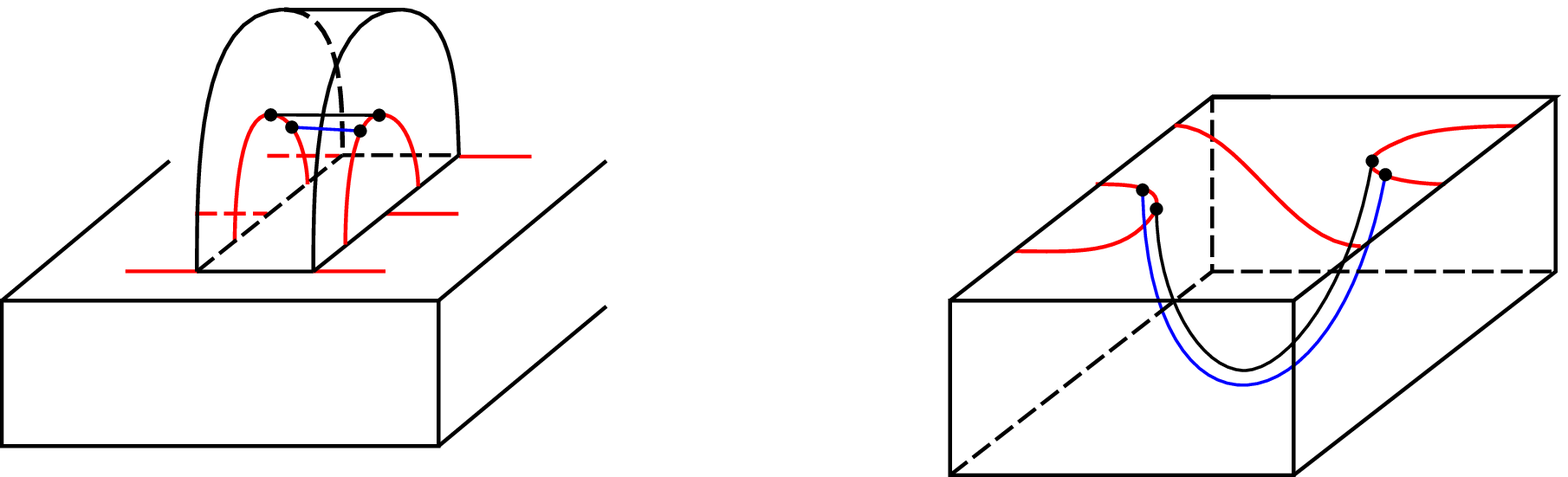}
  \put(24.5,14.5){$-$}
  \put(29.5,18.1){$+$}
  \put(91,20){$+$}
  \put(82,21){$-$}
  \put(78.5,16){$+$}
  \put(68,15){$-$}
  \put(14,-5){(a)}
  \put(76,-5){(b)}
  \end{overpic}
  \newline
  \caption{(a) The Pontryagin submanifold $G^{-1}_{\xi_\alpha}(p)$ contained in $V_\alpha$. (b) The Pontryagin submanifold $G^{-1}_{\xi\ast\sigma_\alpha}(p)$ contained in $V$. The blue arc is a parallel copy of $G^{-1}_{\xi\ast\sigma_\alpha}(p)$ which defines the framing.}
  \label{bypass1}
\end{figure}

\end{proof}

\subsection{Proof of Theorem~\ref{MainThm2}}

We proceed to the proof of Theorem~\ref{MainThm2} which involves three bypass attachments. Our strategy is first to construct a local model for the bypass triangle attachment in $\mathbb{R}^3$, and compute the associated Pontryagin submanifold based on essentially the same methods used in the proof of Theorem~\ref{MainThm1}. Next we identify a neighborhood of the arc of attachment $\alpha$ in $M$ where the bypass triangle is attached with our previously constructed local model, and conclude that the bypass triangle attachment drops $o_3$ by 1.

We first establish a technical lemma which enables us to isotop characteristic foliations on a disk adapted to a fixed dividing set without affecting the Pontryagin submanifold.

\begin{lemma} \label{isotopCharFoli}
Let $(D^2\times[0,1],\xi)$ be a contact 3-manifold with $T(D^2\times[0,1])$ trivialized by the standard embedding $D^2\times[0,1] \subset \mathbb{R}^3$, i.e., $D^2$ is contained in the $xy$-plane and $[0,1]$ is in the direction of the $z$-axis. Suppose the following conditions hold:

\begin{enumerate}

\item {There exists a contact vector field on $D^2\times[0,1]$, with respect to which $D^2\times\{t\}$ are convex and the dividing sets $\Gamma_{D^2\times\{t\}}$ agree for all $t \in [0,1]$.}

\item {The characteristic foliations $\mathscr{F}_{D^2\times\{t\}}$ agree in a neighborhood of $\Gamma_{D^2\times\{t\}}$ for all $t \in [0,1]$.}

\item{The Gauss map $G_\xi$ satisfies: (i) $G_\xi(\Gamma_{D^2\times\{t\}}) \subset \{z=0\} \subset S^2$, (ii) $G_\xi(R_+(D^2\times\{t\})) \subset \{z>0\} \subset S^2$, and (iii) $G_\xi(R_-(D^2\times\{t\})) \subset \{z<0\} \subset S^2$ for any $t \in [0,1]$.}

\item {$p=(1,0,0) \in S^2$ is a regular value of $G_\xi$, and $G_\xi^{-1}(p)$ is disjoint from $\bdry D^2 \times[0,1]$.}

\end{enumerate}

Then $G^{-1}_\xi(p)$ is framed cobordant to $G^{-1}_{\xi_0}(p)$ relative to the boundary, where $\xi_0$ is the $I$-invariant contact structure on $D^2\times[0,1]$ with $\xi_0|_{D^2\times\{0\}} = \xi|_{D^2\times\{0\}}$.
\end{lemma}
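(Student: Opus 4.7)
The plan is to use Giroux's Flexibility Theorem to construct a homotopy of contact structures on $D^2\times[0,1]$ from $\xi$ to $\xi_0$ satisfying hypotheses (1)--(4) throughout, and then read off the desired framed cobordism from the associated family of Gauss maps, in the spirit of Theorem~\ref{relPTconstr}.

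I would first observe that hypotheses (2) and (3) together pin down the 2-plane field along the surface $\Gamma\times[0,1]$. By (3), $G_\xi^{-1}(p) \subset \Gamma\times[0,1]$ (since $p$ lies on the equator $\{z=0\}$), and at any point $x \in \Gamma\times\{t\}$ the plane $\xi_x$ is spanned by $\bdry/\bdry z$ (from (3)(i), which forces the $z$-direction to lie in $\xi$) together with the characteristic foliation direction, which by (2) is $t$-independent near $\Gamma$. Thus $\xi|_{\Gamma\times[0,1]} = \xi_0|_{\Gamma\times[0,1]}$, so the underlying preimages $G_\xi^{-1}(p)$ and $G_{\xi_0}^{-1}(p)$ already coincide as subsets of $D^2\times[0,1]$, and the work lies in matching their framings via a cobordism. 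Next, I would apply Theorem~\ref{Flex} in a $t$-parameterized fashion to the family of convex slices $D^2\times\{t\}$ to produce a smooth ambient isotopy $\phi_s$ of $D^2\times[0,1]$ with $\phi_0 = \mathrm{id}$ and $\phi_1^*\xi = \xi_0$, such that $\phi_s$ is the identity on $D^2\times\{0\}$, on $\bdry D^2\times[0,1]$, and on a tubular neighborhood of $\Gamma\times[0,1]$. The relative character of the isotopy near $\Gamma$ is provided by hypothesis (2): the cut-off vector field used in the proof of Theorem~\ref{Flex} can be chosen to vanish where the characteristic foliations already agree. Setting $\xi_s := \phi_s^*\xi$ yields a smooth family of contact structures from $\xi_0 = \xi$ to $\xi_1 = \xi_0$, each still satisfying (1)--(4).

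The family of Gauss maps assembles into a single smooth map $H:D^2\times[0,1]\times[0,1]\to S^2$ given by $H(x,t,s) = G_{\xi_s}(x,t)$. After a small generic perturbation supported away from the fixed neighborhood of $\Gamma\times[0,1]$, I may assume $p$ is a regular value of $H$. The preimage $H^{-1}(p)$ is then a smoothly embedded framed surface in $(D^2\times[0,1])\times[0,1]$ that restricts to $G_\xi^{-1}(p)$ at $s=0$ and to $G_{\xi_0}^{-1}(p)$ at $s=1$. Because $\xi_s$ is locally constant along $\bdry(D^2\times[0,1])$, the restriction of $H^{-1}(p)$ to $\bdry(D^2\times[0,1])\times[0,1]$ is the trivial product cobordism, giving a framed cobordism relative to the boundary. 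The main obstacle is the parameterized, $\Gamma$-relative application of Giroux's Flexibility Theorem: one must verify that the cut-off construction in its proof can be implemented smoothly in the transverse parameter $t$, so that the isotopy stays identity both near $\Gamma$ and on $\bdry D^2\times[0,1]$. A minor additional ingredient is a Gray-type argument to upgrade $\phi_1^*\xi$ from being merely contact-isotopic to $\xi_0$ to being actually equal, which is possible because hypothesis (1) fixes the dividing set of every slice.
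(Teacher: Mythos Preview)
Your approach is correct but considerably more elaborate than necessary, and in fact your first paragraph already contains essentially the entire proof the paper gives. You observe that $G_\xi^{-1}(p)\subset\Gamma\times[0,1]$ and that $\xi$ agrees with $\xi_0$ along $\Gamma\times[0,1]$. The paper's proof is nothing more than a slight sharpening of this: hypothesis~(2) (the characteristic foliations on the slices agree in a full neighborhood of $\Gamma$, not just on $\Gamma$ itself) together with hypothesis~(1) forces $\xi$ to be $I$-invariant in an open neighborhood of $\Gamma\times[0,1]$, hence $\xi=\xi_0$ there. Since the Pontryagin submanifold and its framing are determined by $G_\xi$ on such a neighborhood, the framed submanifolds $G_\xi^{-1}(p)$ and $G_{\xi_0}^{-1}(p)$ are literally equal, and the cobordism is the trivial product. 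So when you write ``the work lies in matching their framings via a cobordism,'' there is in fact no work: the framings already match.

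Everything that follows in your argument---the parametric application of Giroux flexibility, the family $\xi_s$, the assembled map $H$, the Gray-type upgrade---is logically sound but unnecessary. The ``main obstacle'' you identify (making Theorem~\ref{Flex} smooth in the transverse parameter $t$) is a genuine technical point if one were forced down that road, but the paper sidesteps it entirely by noticing that nothing outside a neighborhood of $\Gamma\times[0,1]$ is relevant to either Pontryagin submanifold. Your route would buy something if the preimages or framings genuinely differed and had to be connected through a homotopy of plane fields; here they do not.
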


\begin{proof}
The conclusion follows from the observation that $G^{-1}_\xi(p) \cap (D^2\times\{t\}) \subset \Gamma_{D^2\times\{t\}}$ for all $t\in[0,1]$, and $\xi$ is $I$-invariant in a neighborhood of $\Gamma_{D^2\times\{0\}} \times [0,1]$ in $D^2\times[0,1]$.
\end{proof}

The following proposition constructs a local model for the bypass triangle attachment explicitly and computes its Pontryagin submanifold.

\begin{prop} \label{localmodel}
Let $T=[-3/4,3/4]\times[-1,1]\times[0,3] \subset \mathbb{R}^3$ be a 3-manifold, $\eta= ker(\cos(2\pi x)dy-\sin(2\pi x)dz)$ be a contact structure on $T$, and $\alpha=\{-1/2 \leq x \leq 1/2, y=z=0\}$ be a Legendrian arc. Then there exists a contact 3-manifold $(T,\eta\ast\triangle_\alpha)$ where $\eta\ast\triangle_\alpha$ is the contact structure obtained from $\eta$ by attaching a bypass triangle along $\alpha$, such that the Pontryagin submanifold $G^{-1}_{\eta\ast\triangle_\alpha}(p)$ is the unknot with framing $-1$ with respect to the standard orientation. Here $p=(1,0,0) \in S^2$ is a regular value of $G^{-1}_{\eta\ast\triangle_\alpha}$.
\end{prop}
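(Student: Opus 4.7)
The plan is to construct $(T, \eta \ast \triangle_\alpha)$ by stacking three copies of the single-bypass local model from Theorem~\ref{MainThm1}, and then compute the resulting Pontryagin submanifold by concatenating the three arcs produced by that theorem. Concretely, I would decompose $T = T_1 \cup T_2 \cup T_3$ where $T_i = [-3/4, 3/4] \times [-1, 1] \times [i-1, i]$, and put on each slab $T_i$ a copy of the contact manifold $(V, \xi \ast \sigma_\alpha)$, rotated in the plane of the bypass triangle so that the arc of attachment corresponds to $\alpha$, $\alpha'$, $\alpha''$ respectively as in Figure~\ref{BypassTri}. The dividing sets on the interface slices $z = 1$ and $z = 2$ then coincide with the intermediate panels of Figure~\ref{BypassTri}; since they agree from both sides, Lemma~\ref{isotopCharFoli} together with Theorem~\ref{Flex} let me glue the three contact structures into a single $\eta \ast \triangle_\alpha$ on $T$, arranged so that the characteristic foliation is $I$-invariant in a neighborhood of each interface. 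Because panels (a) and (d) of Figure~\ref{BypassTri} agree, the dividing set on $\partial T$ is unchanged from that of $\eta$, so $\eta \ast \triangle_\alpha$ genuinely realizes the bypass triangle attachment.

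Next I would compute the Pontryagin submanifold $G^{-1}_{\eta \ast \triangle_\alpha}(p)$. Applying Theorem~\ref{MainThm1} to each $T_i$ separately (after the rotation) produces a properly embedded framed arc $\gamma_i \subset T_i$ whose geometry is described by Figure~\ref{thm1}. Lemma~\ref{isotopCharFoli} shows that the $I$-invariant neighborhoods of the interfaces contribute no additional components; moreover, the endpoints of $\gamma_i$ on $\{z = i\}$ match exactly with the endpoints of $\gamma_{i+1}$ on $\{z = i\}$, because each arc meets the interface slice in its dividing set, which agrees from both sides by construction. Since the dividing set on $\partial T$ is unchanged, no endpoints of the Pontryagin submanifold can lie on $\partial T$, so the three arcs $\gamma_1, \gamma_2, \gamma_3$ assemble into a single closed curve $\gamma \subset \mathrm{int}(T)$.

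Finally, I would argue that $\gamma$ is the unknot with framing $-1$. Unknottedness is essentially immediate once the geometry is set up: each $\gamma_i$ is an unknotted arc lying close to a $2\pi/3$-rotated copy of the plane $y = 0$, and the three such arcs form a planar triangular loop that clearly bounds an embedded disk in $T$. The self-linking computation is the main technical obstacle, and it is where the integer $-1$ must emerge. From Figure~\ref{thm1}, each $\gamma_i$ comes with a framing specified by a parallel blue arc; the framing of the concatenated curve $\gamma$ is obtained by gluing these three framings at the interfaces, and its self-linking reduces to tracking the rotation of the parallel blue arc against $\gamma$ as we cross each interface. A careful accounting of these three interface rotations, combined with the framing data from Figure~\ref{thm1}, should produce a total self-linking of $-1$, proving the proposition.
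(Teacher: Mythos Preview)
Your overall strategy of stacking three slabs and concatenating arcs is close in spirit to the paper's argument, but there is a genuine gap in the step where you ``apply Theorem~\ref{MainThm1} to each $T_i$ separately (after the rotation).'' The Pontryagin submanifold in Theorem~\ref{MainThm1} is computed for a \emph{fixed} trivialization of $TV$ (the standard one from $V\subset\mathbb{R}^3$) and a \emph{fixed} regular value $p=(1,0,0)$. If you rotate the local model in the $xy$-plane by some $R$ to align with $\alpha'$ or $\alpha''$, the Gauss map of the rotated slab becomes $R\circ G_{\xi\ast\sigma_\alpha}\circ R^{-1}$, so its preimage of $p$ is $R\bigl(G_{\xi\ast\sigma_\alpha}^{-1}(R^{-1}p)\bigr)$, not $R$ applied to the arc in Figure~\ref{thm1}. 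Since $R^{-1}p\neq p$, Theorem~\ref{MainThm1} tells you nothing about this set; indeed the Remark following that theorem shows the preimage can be empty for other regular values. Consequently your arcs $\gamma_2,\gamma_3$ are not what you claim, and the endpoint-matching and framing count are unsupported.

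The paper avoids this obstacle by treating the three bypasses \emph{asymmetrically}. Only the first bypass uses Theorem~\ref{MainThm1} directly. For the second and third, the freedom afforded by Lemma~\ref{isotopCharFoli} is used not to glue rotated copies, but to choose Legendrian representatives of $\alpha'$ and $\alpha''$ so that the bypass regions $D_{\alpha'}\times[-\epsilon,\epsilon]$ and $D_{\alpha''}\times[-\epsilon,\epsilon]$ miss $p$ under the Gauss map entirely; hence they contribute nothing new to the Pontryagin submanifold. The arc from Step~1 is then closed up into an unknot by an explicit isotopy supported near the dividing set (the paper's Substep~2.2), and the framing $-1$ is read off from the resulting picture rather than from a symmetric interface count. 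To repair your argument you would either have to recompute $G^{-1}(p)$ for the rotated models from scratch, or adopt this asymmetric scheme.
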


\begin{proof}

We construct $(T,\eta\ast\triangle_\alpha)$ and compute its Pontryagin submanifold in three steps corresponding to three bypass attachments $\sigma_\alpha$, $\sigma_{\alpha'}$ and $\sigma_{\alpha''}$ respectively.

\s\n
{\sc Step 1.} We simply use the construction of $(V,\eta\ast\sigma_\alpha)$ \footnote{The contact structure $\eta$ here is the same as $\xi$ in the notation of Theorem~\ref{MainThm1}.} in the proof of Theorem~\ref{MainThm1}. Recall that the Pontryagin submanifold $G^{-1}_{\eta\ast\sigma_\alpha}(p)$ is a framed arc in $V$ as depicted in Figure~\ref{bypass1}(b).

\s\n
{\sc Step 2.} We compute the Pontryagin submanifold associated with the second bypass attachment $\sigma_{\alpha'}$ in two substeps.

\s
{\sc Substep 2.1.} We attach the second bypass in a similar manner. Let $U=[-3/4,3/4] \times [-1,1] \times [1,2] \subset \mathbb{R}^3$ be a contact 3-manifold with contact structure obtained by a $\bdry/\bdry z$-invariant extension of $\eta\ast\sigma_\alpha|_{\Sigma_1}$, where $\Sigma_1=[-3/4,3/4]\times[-1,1]\times\{1\}$. Recall that the second bypass is attached along the Legendrian arc $\alpha'$ as depicted in Figure~\ref{BypassTri}(b). Let $D_{\alpha'}$ be the bypass along $\alpha'$, and $(U_{\alpha'},\eta_{\alpha,\alpha'})$ be the contact 3-manifold obtained by rounding the corners of $U \cup (D_{\alpha'}\times[-\epsilon,\epsilon])$ with the glued contact structure for small $\epsilon>0$. By Lemma~\ref{isotopCharFoli}, we can choose a Legendrian representative $\alpha'$ within its isotopy class such that $ p \notin G_{\eta_{\alpha,\alpha'}}(D_{\alpha'}\times[-\epsilon,\epsilon])$, the image of $D_{\alpha'}\times[-\epsilon,\epsilon]$ under the associated Gauss map $G_{\eta_{\alpha,\alpha'}}$. Since the contact structure remains $I$-invariant away from a neighborhood of $\alpha'$, by pushing $D_{\alpha'}\times[-\epsilon,\epsilon]$ into $U$, we obtain the contact 3-manifold $(U,(\eta \ast \sigma_\alpha|_{\Sigma_1}) \ast \sigma_{\alpha'})$ whose Pontryagin submanifold $G^{-1}_{(\eta \ast \sigma_\alpha|_{\Sigma_1}) \ast \sigma_{\alpha'}}(p)$ is as depicted in Figure~\ref{bypass2}(a).\\

\begin{figure}[h]
  \begin{overpic}[scale=.37]{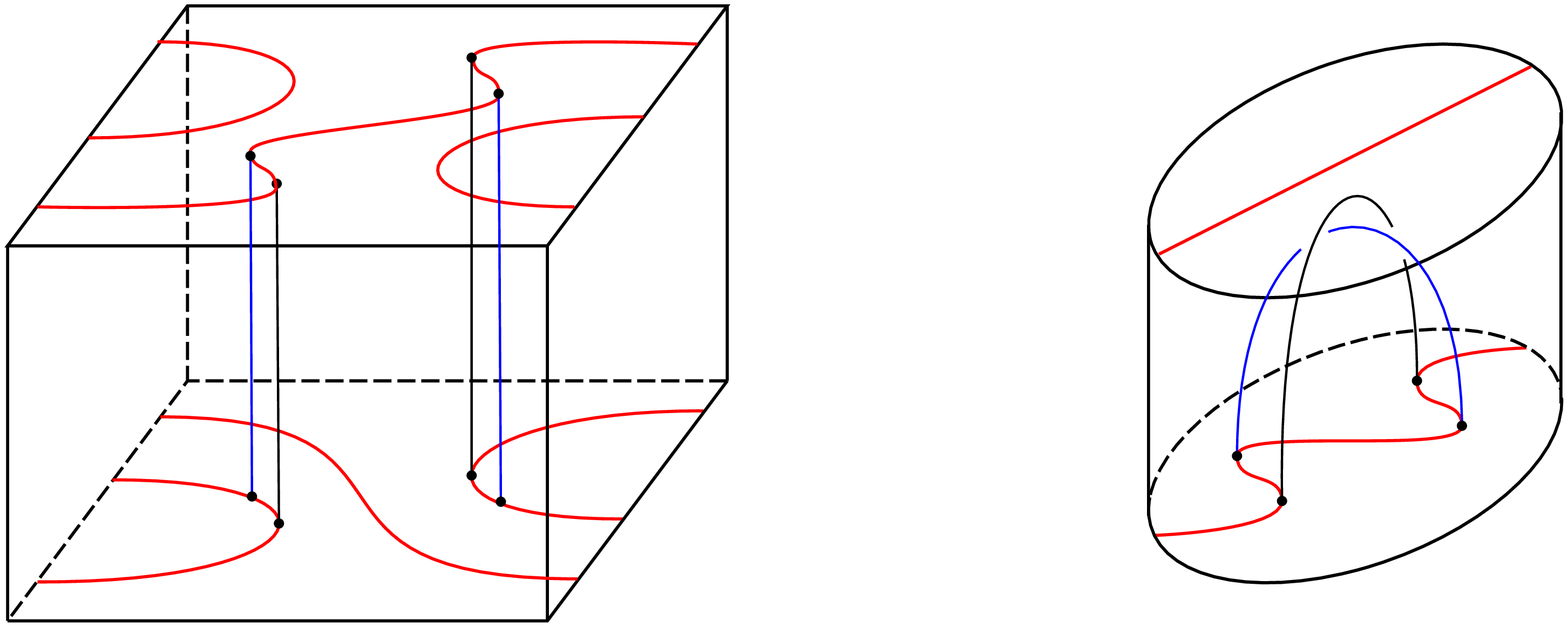}
  \put(86,9){$\gamma$}
  \put(6,4.5){$-$}
  \put(18,4){$+$}
  \put(24.5,10.5){$-$}
  \put(35.5,8.5){$+$}
  \put(16,-5){(a)}
  \put(84,-5){(b)}
  \end{overpic}
  \newline
  \caption{(a) The Pontryagin submanifold $G^{-1}_{(\eta \ast \sigma_\alpha|_{\Sigma_1}) \ast \sigma_{\alpha'}}(p)$ contained in $U$. (b) The Pontryagin submanifold $G^{-1}_\tau(p)$ contained in $N(\gamma)\times[2,3]$.}
  \label{bypass2}
\end{figure}

\s
{\sc Substep 2.2} Let $\gamma \subset \Gamma_{\Sigma_2}$ be the arc containing the endpoints of $G^{-1}_{(\eta \ast \sigma_\alpha|_{\Sigma_1}) \ast \sigma_{\alpha'}}(p)$ on $\Sigma_2=[-3/4,3/4] \times [-1,1] \times \{2\}$, and $N(\gamma)$ be a neighborhood of $\gamma$ on $\Sigma_2$. It is easy to see that there exists an isotopy $\phi_t:N(\gamma) \to N(\gamma)$, $t\in[0,1]$, $\phi_0=id$, such that $p$ is not contained in the image of $N(\gamma)$ under the Gauss map $G_{(\phi_1)_\ast(\eta\ast\sigma_\alpha\ast\sigma_{\alpha'}|_{N(\gamma)})}$. If we define $\Phi:N(\gamma)\times[2,3] \to N(\gamma)\times[2,3]$ by $\Phi(x,t)=(\phi_t(x),t)$ for $x \in N(\gamma)$, $t\in[2,3]$, then we can push-forward a $\bdry/\bdry z$-invariant contact structure $\eta\ast\sigma_\alpha\ast\sigma_{\alpha'}|_{N(\gamma)}$ on $N(\gamma)\times[2,3]$ via $\Phi$ to obtain a new contact structure on $N(\gamma)\times[2,3]$, which we denote by $\tau$. The Pontryagin submanifold $G^{-1}_\tau(p)$ in $N(\gamma)\times[2,3]$ is a framed arc as depicted in Figure~\ref{bypass2}(b). Hence we obtain a contact manifold $(U \cup (N(\gamma)\times[2,3]),((\eta\ast\sigma_\alpha)|_{\Sigma_1}\ast\sigma_{\alpha'}) \cup \tau)$. By rounding the corners of $N(\gamma)\times[2,3]$ and pushing it into $U$ as usual, we obtain the contact 3-manifold which we still denote by $(U,(\eta\ast\sigma_\alpha)|_{\Sigma_1}\ast\sigma_{\alpha'})$ whose associated Pontryagin submanifold $G^{-1}_{(\eta \ast \sigma_\alpha|_{\Sigma_1}) \ast \sigma_{\alpha'}}(p)$ is a framed arc as depicted in Figure~\ref{bypass2end}.

\begin{figure}[h]
    \begin{overpic}[scale=.35]{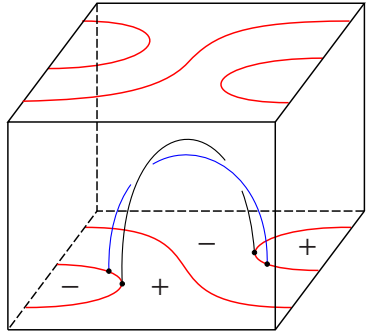}
    \put(15,10){$-$}
    \put(40,10){$+$}
    \put(53,22){$-$}
    \put(81,21){$+$}
    \end{overpic}
    \caption{The Pontryagin submanifold $G^{-1}_{(\eta \ast \sigma_\alpha|_{\Sigma_1}) \ast \sigma_{\alpha'}}(p)$ contained in $U$ after an isotopy.}
    \label{bypass2end}
\end{figure}

\s\n
{\sc Step 3.} We finish the bypass triangle by attaching the third bypass $D_{\alpha''}$ along $\alpha''$ as depicted in Figure~\ref{BypassTri}(c). As in previous steps, let $W=[-3/4,3/4] \times [-1,1] \times [2,3] \subset \mathbb{R}^3$ be a contact 3-manifold with contact structure obtained by a $\bdry/\bdry z$-invariant extension of $\eta\ast\sigma_\alpha\ast\sigma_{\alpha'}|_{\Sigma_2}$.  Again by Lemma~\ref{isotopCharFoli}, we can choose $\alpha''$ so that $p$ is not contained in the image of $D_{\alpha''}\times[-\epsilon,\epsilon]$ under the Gauss map. Hence the same argument as before produces the third contact 3-manifold $(W,(\eta \ast \sigma_\alpha \ast \sigma_{\alpha'}|_{\Sigma_2}) \ast \sigma_{\alpha''})$ whose associated Pontryagin submanifold $G^{-1}_{(\eta \ast \sigma_\alpha \ast \sigma_{\alpha'}|_{\Sigma_2}) \ast \sigma_{\alpha''}}(p)$ is the empty set.

Finally, in order to construct $(T,\eta\ast\triangle_\alpha)$ with the desired properties, we simply let $(T,\eta\ast\triangle_\alpha)=(V,\eta\ast\sigma_\alpha) \cup (U,(\eta\ast\sigma_\alpha|_{\Sigma_1})\ast\sigma_{\alpha'}) \cup (W,(\eta\ast\sigma_\alpha\ast\sigma_{\alpha'}|_{\Sigma_2})\ast\sigma_{\alpha''})$ glued along adjacent faces. It is easy to see that the associated Pontryagin submanifold $G^{-1}_{\eta\ast\triangle_\alpha}(p)$ obtained by gluing the framed arcs from Steps 1, 2, and 3 is the unknot with framing $-1$. See Figure~\ref{PTofTri}.

\begin{figure}[h]
    \begin{overpic}[scale=.45]{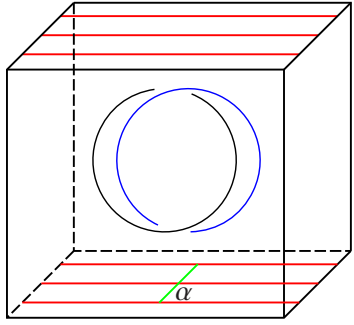}
    \put(49,5.6){\small{$\alpha$}}
    \end{overpic}
    \caption{The Pontryagin submanifold $G^{-1}_{\eta\ast\triangle_\alpha}(p)$. The blue circle is a parallel copy of $G^{-1}_{\eta\ast\triangle_\alpha}(p)$ which defines the framing.}
    \label{PTofTri}
\end{figure}

\end{proof}

\begin{proof}[Proof of Theorem~\ref{MainThm2}]
Let $\alpha\subset \bdry M$ be the Legendrian arc such that $\xi' \simeq \xi\ast\triangle_\alpha$ relative to the boundary, and $N(\alpha)$ be a neighborhood of $\alpha$ on $\bdry M$. Let $\bdry M\times[-1,0] \subset M$ be a collar neighborhood of $\bdry M$ with an $I$-invariant contact structure such that $\bdry M$ is identified with $\bdry M\times\{0\}$. Assume up to a boundary relative isotopy that $\triangle_\alpha$ is supported in $N(\alpha)\times[-2/3,-1/3] \subset int(M)$, i.e., $\xi=\xi'$ on $M \setminus (N(\alpha)\times[-2/3,-1/3])$, and that there exists a contactomorphism $\psi:(N(\alpha)\times[-2/3,-1/3],\xi') \to (T,\eta\ast\triangle_\alpha)$ where $(T,\eta\ast\triangle_\alpha)$ is the local model for a bypass triangle attachment constructed in Proposition~\ref{localmodel}. Without loss of generality, we also choose the trivialization of $TM$ so that its restriction to $N(\alpha)\times[-2/3,-1/3]$ coincides with the pull-back of $T\mathbb{R}^3$ via $\psi$. Let $p=(1,0,0)\in S^2$ be a common regular value of $G_\xi$ and $G_{\xi'}$. Observe that the Pontryagin submanifold $G^{-1}_{\xi'}(p)$ restricted to $N(\alpha)\times[-2/3,-1/3]$ is the unknot with framing $-1$. Since $G^{-1}_\xi(p)$ restricted to $N(\alpha)\times[-2/3,-1/3]$ is the empty set, it follows from Definition~\ref{obsrtclass} that $o_3(\xi,\xi')=-1$ as desired.

In particular, $\xi$ is not homotopic to $\xi'$ relative to the boundary by Proposition~\ref{Obs} since $d(\xi)$ is always even.
\end{proof}

\noindent
{\em Acknowledgements}. The author is grateful to Ko Honda for patient guidance throughout this work. I also thank Francis Bonahon for discussions on the Hopf invariant.

\end{document}